\newtheorem{theorem}{Theorem}[section]
\newtheorem{lemma}[theorem]{Lemma}
\theoremstyle{definition}
\newtheorem{definition}[theorem]{Definition}
\newtheorem{remark}[theorem]{Remark}
\newtheorem{observation}[theorem]{Observation}
\author{Krishnendu Paul, Shameek Paul%
\thanks{E-mail addresses: \texttt{krishnendupaul@ggdcgopi2.ac.in, shameek.paul@rkmvu.ac.in}}}
\affil{\small
Government General Degree College Gopiballavpur-II,
P.O. Beliaberah, Dist. Jhargram, 721517, India \\

\bigskip
Ramakrishna Mission Vivekananda Educational and Research Institute, P.O. Belur Math, Dist. Howrah, 711202, India}
\date{}
\title {Doubly-weighted zero-sum constants}
\begin{document}

\baselineskip=14.5pt

\maketitle

\begin{abstract}
Let $A,B\subseteq\mathbb Z_n$ be given and $S=(x_1,\ldots, x_k)$ be a sequence in $\mathbb Z_n$. We say that $S$ is an $(A,B)$-weighted zero-sum sequence if there exist $a_1,\ldots,a_k\in A$ and $b_1,\ldots,b_k\in B$ such that $a_1x_1+\cdots+a_kx_k=0$ and $b_1a_1+\cdots+b_ka_k=0$. We show that if $S$ has length $2n-1$, then $S$ has an $(A,B)$-weighted zero-sum subsequence of length $n$. The constant $E_{A,B}$ is defined to be the smallest positive integer $k$ such that every sequence of length $k$ in $\mathbb Z_n$ has an $(A,B)$-weighted zero-sum subsequence of length $n$. A sequence in $\mathbb Z_n$ of length $E_{A,B}-1$ which does not have any $(A,B)$-weighted zero-sum subsequence of length $n$ is called an $E$-extremal sequence for $(A,B)$. We determine the constant $E_{A,B}$ and characterize the $E$-extremal sequences for some pairs $(A,B)$. We also study the related constants $C_{A,B}$ and $D_{A,B}$ which are defined in the article.
\end{abstract}

{\small \noindent
Keywords: weighted zero-sum sequence, Davenport constant, extremal sequence \\
Mathematics Subject Classifications: 11B50, 11B75}

\section{Introduction}

By a sequence $S$ in a set $X$ of length $k$, we mean an element of the set $X^k$. Let $R$ be a non-zero ring with unity, let $A$ and $B$ be non-empty subsets of $R$, and let $M$ be an $R$-module. A sequence $(x_1,\ldots, x_k)$ in $M$ is called an {\it $A$-weighted zero-sum sequence} if there exist $a_1,\ldots,a_k\in A$ such that $a_1x_1+\cdots+a_kx_k=0$.

A sequence $(x_1,\ldots,x_k)$ in $M$ is called an {\it $(A,B)$-weighted zero-sum sequence} if there exist $a_1,\ldots,a_k\in A$ and $b_1,\ldots,b_k\in B$ such that $a_1x_1+\cdots+a_kx_k=0$ and $b_1a_1+\cdots +b_ka_k=0$. In particular, an $(A,B)$-weighted zero-sum sequence is also an $A$-weighted zero-sum sequence.

For $a,b\in\mathbb Z$ we denote the set $\{x\in\mathbb Z:a\leq x\leq b\}$ by $[a,b]$. We let $|A|$ denote the number of elements in a finite set $A$. We denote the subsets $\{0\}$ and $\{1\}$ of the ring $R$ by the boldface symbols $\mathbf 0$ and $\mathbf 1$ respectively. A sequence which is a $\mathbf 1$-weighted zero-sum sequence is simply called a {\it zero-sum sequence}. We now define the constants which we study in this article.

Let $(A,B)$ be a weight-set pair. The constant $C_{A,B}(M)$ is the least positive integer $k$ such that every sequence in $M$ of length $k$ has an $(A,B)$-weighted zero-sum subsequence having consecutive terms. The constant $D_{A,B}(M)$ is the least positive integer $k$ such that every sequence in $M$ of length $k$ has an $(A,B)$-weighted zero-sum subsequence.

The constant $E_{A,B}(M)$ is the least positive integer $k$ such that every sequence in $M$ of length $k$ has an $(A,B)$-weighted zero-sum subsequence of length $|M|$. The constants $C_A(M)$, $D_A(M)$, and $E_A(M)$ are defined to be the constants $C_{A,\mathbf 0}(M)$, $D_{A,\mathbf 0}(M)$, and $E_{A,\mathbf 0}(M)$ respectively. When $G$ is a finite abelian group and $A\subseteq \mathbb Z$, a related constant $s_A(G)$ has been studied in \cite{AGS}.

\begin{observation}\label{cca}
We see that $D_{A,B}(M)\leq C_{A,B}(M)$, $D_{A,B}(M)\leq E_{A,B}(M)$, and that $C_A(M)\leq C_{A,B}(M)$, $D_A(M)\leq D_{A,B}(M)$, and $E_A(M)\leq E_{A,B}(M)$.
\end{observation}

Let char $R$ be the characteristic of the ring $R$.

\begin{observation}\label{char}
Let $M$ be an $R$-module, let $A,B$ be non-empty subsets of $R$, and let $S=(x_1,\ldots,x_k)$ be a zero-sum sequence in $M$. Suppose char $R$ is positive and $k$ is a multiple of char $R$. We claim that $S$ is an $(A,B)$-weighted zero-sum sequence. Fix $a\in A$ and $b\in B$. As $S$ is a zero-sum sequence, we see that $x_1+\cdots+x_k=0$ and hence $ax_1+\cdots+ax_k=0$. Also, as $k$ is a multiple of char $R$, we see that $\underbrace{ba+\cdots+ba}_\text{$k$ times}=kba=0$. Hence, our claim is true.
\end{observation}

We give some conditions under which the constants $C_{A,B}(M)$, $D_{A,B}(M)$, and $E_{A,B}(M)$ exist.

\begin{theorem}\label{upbd}
Let $M$ be a finite $R$-module, let $A$ and $B$ be non-empty subsets of $R$, and let $m=|M|$. Suppose char $R$ is positive and $m$ is a multiple of char $R$. Then we have $C_{A,B}(M)\leq m^2$.
\end{theorem}

\begin{proof}
Let $S=(x_1,\ldots,x_k)$ be a sequence in $M$ of length $k=m^2$. For every $i\in [1,m]$ we let $y_i=x_1+x_2+\cdots+x_{im}\in M$. If all the $y_i$'s are distinct, then there exists $j\in [1,m]$ such that $y_j=0$. If not, there exist $i,j\in [1,m]$ with $i<j$ such that $y_i=y_j$ and so $x_{im+1}+\cdots +x_{jm}=y_j-y_i=0$. Thus, in both the cases, we get a zero-sum subsequence $T$ of $S$ having consecutive terms whose length is a multiple of $m$. As $m$ is a multiple of char $R$, by Observation \ref{char} we see that $T$ is an $(A,B)$-weighted zero-sum subsequence. Thus, it follows that $C_{A,B}(M)\leq m^2$.
\end{proof}

\begin{remark}\label{upbdd}
Let $M$ be a finite $R$-module and let $m=|M|$. By the Erd\H{o}s-Ginzburg-Ziv theorem \cite{EGZ}, we see that $E_\mathbf 1 (M)\leq 2m-1$. Let $A$ and $B$ be non-empty subsets of $R$. Suppose char $R$ is positive and $m$ is a multiple of char $R$. Since $E_\mathbf 1(M)\leq 2m-1$, by Observation \ref{char} it follows that $E_{A,B}(M)\leq 2m-1$.
\end{remark}

A sequence in $M$ of length $C_{A,B}(M)-1$ not having any $(A,B)$-weighted zero-sum subsequence of consecutive terms is called a {\it $C$-extremal sequence} for $(A,B)$. A sequence in $M$ of length $D_{A,B}(M)-1$ not having any $(A,B)$-weighted zero-sum subsequence is called a {\it $D$-extremal sequence} for $(A,B)$. A sequence in $M$ of length $E_{A,B}(M)-1$ not having any $(A,B)$-weighted zero-sum subsequence of length $|M|$ is called an {\it $E$-extremal sequence} for $(A,B)$.

Let $A$ be a non-empty subset of $R$. A sequence in $M$ of length $D_A(M)-1$ not having any $A$-weighted zero-sum subsequence is called a {\it $D$-extremal sequence} for $A$. We can also define $C$-extremal sequences for $A$ and $E$-extremal sequences for $A$ as in \cite{SKS2} and \cite{SKS3}.

Let $S=(x_1,\ldots,x_k)$ and $T=(y_1,\ldots,y_k)$ be sequences in $M$. We say that $S$ and $T$ are {\it equivalent} if there exists a permutation $\sigma$ of the set $[1,k]$ and there exists a unit $u\in R$ such that for every $i\in [1,k]$ we have $y_{\sigma(i)}=u\,x_i$. We say that $S$ and $T$ are {\it order-equivalent} if there exists a unit $u\in R$ such that for every $i\in [1,k]$ we have $y_i=u\,x_i$.

\begin{remark}\label{eq}
If $S$ is a $C$-extremal sequence for $(A,B)$ and if $T$ is order-equivalent to $S$, then $T$ is a $C$-extremal sequence for $(A,B)$. If $S$ is a $D$-extremal sequence for $(A,B)$ and if $T$ is equivalent to $S$, then $T$ is a $D$-extremal sequence for $(A,B)$. If $S$ is an $E$-extremal sequence for $(A,B)$ and if $T$ is equivalent to $S$, then $T$ is an $E$-extremal sequence for $(A,B)$.
\end{remark}

Let $S=(x_1,\ldots,x_k)$ be a sequence in a module $M$ and $x\in M$. We let $S+x$ be the sequence $(x_1+x,\ldots,x_k+x)$. We say that $S+x$ is a {\it translate} of $S$. We will use the following observation in Section \ref{sz1}.

\begin{observation}\label{tr}
Let $S$ be an $(A,\mathbf 1)$-weighted zero-sum sequence in a module $M$. Since for every $a_1,\ldots,a_k\in R$ and for every $x,x_1,\ldots,x_k\in M$ we have
\[a_1(x_1+x)+\cdots+a_k(x_k+x)=a_1x_1+\cdots +a_kx_k+(a_1+\cdots+a_k)x,\]
we see that every translate of $S$ is also an $(A,\mathbf 1)$-weighted zero-sum sequence.
\end{observation}

For every $n\in \mathbb N$ with $n\geq 2$ we denote the ring $\mathbb Z/n\mathbb Z$ by $\mathbb Z_n$ and we denote the set $\mathbb Z_n\setminus\{0\}$ by $\mathbb Z_n'$. We let $U(n)$ denote the group of units of $\mathbb Z_n$.  Henceforth, we consider the ring $\mathbb Z_n$ as a module over itself and we let $A$ and $B$ be non-empty subsets of $\mathbb Z_n'$. We denote the constant $E_{A,B}(\mathbb Z_n)$ simply as $E_{A,B}(n)$ or even as $E_{A,B}$ when the value of $n$ is clear from the weight-set pair $(A,B)$ and we denote the constant $E_A(\mathbb Z_n)$ by $E_A(n)$. The constant $E_A(n)$ has been studied extensively, for example in \cite{ACFKP}.

We adopt a similar notation for the constants $C_{A,B}(\mathbb Z_n)$ and $D_{A,B}(\mathbb Z_n)$. From Theorem \ref{upbd} and Remark \ref{upbdd} it follows that for every $n\in \mathbb N$ we have
\begin{equation}\label{ubzn}
C_{A,B}(n)\leq n^2 ~\mbox{ and }~ D_{A,B}(n)\leq E_{A,B}(n)\leq 2n-1.
\end{equation}
From the results in the next section, we see that the upper bounds in \eqref{ubzn} are sharp.

It is shown in \cite{Gr} that if $S$ is a sequence of $m+n-1$ elements from a finite abelian group $G$ of order $m$ and exponent $k$ and if $(a_1,\ldots,a_n)$ is a sequence in $\mathbb Z_k$ such that $a_1+\cdots +a_n=0$, then there exists a {\it rearranged} subsequence $(x_1,\ldots,x_n)$ of $S$ such that $a_1x_1+\cdots+a_nx_n=0$. This extends the Erd\H{o}s-Ginzburg-Ziv theorem and confirms a conjecture of Y. Caro. When $n=|G|$, this result implies that $E_{A,\mathbf 1}(G)\leq 2n-1$ where $A=\{a_1,\ldots,a_n\}$.

Let $G$ be a finite abelian group and let $A\subseteq \mathbb Z$ be non-empty. In \cite{GMO}, it is shown that $E_A(G)=|G|+D_A(G)-1$ confirming a conjecture of Thangadurai and the expectations of Adhikari, et al.

\section{$(\mathbf 1,\mathbf 1)$-weighted zero-sum constants}

\begin{observation}\label{star}
Let $S$ be a sequence in $\mathbb Z_n$. If $S$ is a zero-sum sequence whose length is a multiple of $n$, we see that $S$ is a $(\mathbf 1,\mathbf 1)$-weighted zero-sum sequence. Conversely, if $S$ is a $(\mathbf 1,\mathbf 1)$-weighted zero-sum sequence, we see that $S$ is a zero-sum sequence whose length is a multiple of $n$.
\end{observation}

\begin{theorem}\label{e'1}
We have $D_{\mathbf 1,\mathbf 1}(n)=E_{\mathbf 1,\mathbf 1}(n)=2n-1$.
\end{theorem}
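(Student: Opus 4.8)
The plan is to prove both equalities by squeezing $D_{\mathbf{1},\mathbf{1}}(n)$ and $E_{\mathbf{1},\mathbf{1}}(n)$ into the single value $2n-1$. I would first record the upper bound, which is essentially free: since $R=\mathbb{Z}_n$ has characteristic $n$ and $m=|\mathbb{Z}_n|=n$ is trivially a multiple of $n$, Theorem~\ref{upbd} applies and gives $E_{\mathbf{1},\mathbf{1}}(n)\le 2n-1$. Combined with the general inequality $D_{A,B}(M)\le E_{A,B}(M)$ noted in the excerpt, this yields $D_{\mathbf{1},\mathbf{1}}(n)\le E_{\mathbf{1},\mathbf{1}}(n)\le 2n-1$. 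Hence it remains only to prove the matching lower bound $D_{\mathbf{1},\mathbf{1}}(n)\ge 2n-1$, i.e.\ to exhibit a sequence of length $2n-2$ with no nonempty $(\mathbf{1},\mathbf{1})$-weighted zero-sum subsequence.

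The decisive simplification comes from Observation~\ref{star}: a subsequence is $(\mathbf{1},\mathbf{1})$-weighted zero-sum precisely when it is a zero-sum sequence whose length is a multiple of $n$. In a sequence of length $2n-2$, the only positive multiple of $n$ that can occur as a subsequence length is $n$ itself, since the next multiple, $2n$, exceeds $2n-2$ while $n\le 2n-2$ holds for $n\ge 2$. So I only need a sequence of length $2n-2$ none of whose length-$n$ subsequences sums to $0$ in $\mathbb{Z}_n$.

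For the construction I would take the sequence $S$ consisting of $n-1$ copies of $0$ followed by $n-1$ copies of $1$. Any subsequence of length $n$ uses $j$ of the zeros and $n-j$ of the ones; the availability constraints $0\le j\le n-1$ and $0\le n-j\le n-1$ force $1\le j\le n-1$, so the number of ones, $n-j$, lies strictly between $0$ and $n$. Its sum is therefore $n-j\not\equiv 0\pmod n$, so no length-$n$ subsequence is zero-sum. By the reduction of the previous paragraph, $S$ has no $(\mathbf{1},\mathbf{1})$-weighted zero-sum subsequence, which establishes $D_{\mathbf{1},\mathbf{1}}(n)\ge 2n-1$.

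Putting the bounds together gives $2n-1\le D_{\mathbf{1},\mathbf{1}}(n)\le E_{\mathbf{1},\mathbf{1}}(n)\le 2n-1$, so both constants equal $2n-1$. I do not anticipate a serious obstacle: the upper bound is inherited directly from Theorem~\ref{upbd}, and the only genuine content is recognising, via Observation~\ref{star}, that the weighted condition collapses to ``zero-sum of length exactly $n$'' and then verifying the standard extremal example. The single point warranting care is confirming that $n$ is the sole admissible multiple of $n$ in the range $[1,2n-2]$, which is exactly what keeps the verification to one case.
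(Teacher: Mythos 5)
Your proposal is correct and takes essentially the same route as the paper: the upper bound is inherited from Theorem~\ref{upbd} via \eqref{ubzn}, and the lower bound comes from the same extremal sequence of $n-1$ zeros followed by $n-1$ ones, reduced through Observation~\ref{star}. Your explicit verification that $n$ is the only admissible multiple of $n$ in $[1,2n-2]$ simply spells out what the paper's terser ``$S'$ does not have any zero-sum subsequence of length $n$'' leaves implicit.
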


\begin{proof}
From \eqref{ubzn} we see that $D_{\mathbf 1,\mathbf 1}(n)\leq E_{\mathbf 1,\mathbf 1}(n)\leq 2n-1$. Consider the sequence
\[S=\big(\underbrace{\,0,\ldots,0}_{n-1},\,\underbrace{1,\ldots,1}_{n-1}\big).\]
We note that $S$ does not have any zero-sum subsequence whose length is a multiple of $n$. Thus, by Observation \ref{star} it follows that $D_{\mathbf 1,\mathbf 1}(n)\geq 2n-1$. Hence, we conclude that $D_{\mathbf 1,\mathbf 1}(n)=E_{\mathbf 1,\mathbf 1}(n)=2n-1$.
\end{proof}

\begin{theorem}\label{11}
Let $S$ be a sequence in $\mathbb Z_n$. The following are equivalent.
\begin{enumerate}[label=(\arabic*),ref=(\arabic*)]
\item[$(a)$]
$S$ is a $D$-extremal sequence for $(\mathbf 1,\mathbf 1)$

\item[$(b)$]
$S$ is an $E$-extremal sequence for $(\mathbf 1,\mathbf 1)$

\item[$(c)$]
$S$ is an $E$-extremal sequence for $\mathbf 1$

\item[$(d)$]
$S$ is a translate of a sequence which is equivalent to $\big(\underbrace{\,0,\ldots,0}_{n-1},\,\underbrace{1,\ldots,1}_{n-1}\big)$
\end{enumerate}
\end{theorem}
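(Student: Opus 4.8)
The plan is to funnel all four statements through a single combinatorial condition, which I will call $(\mathrm P)$: \emph{$S$ has length $2n-2$ and $S$ has no zero-sum subsequence of length $n$.} First I would fix the lengths. By Theorem \ref{e'1} a $D$- or $E$-extremal sequence for $(\mathbf 1,\mathbf 1)$ has length $2n-2$. For statement $(c)$, the bound in Theorem \ref{upbd} gives $E_{\mathbf 1}(n)\le 2n-1$, while the sequence $S'$ has length $2n-2$ and (by the direct check below) no zero-sum subsequence of length $n$; hence $E_{\mathbf 1}(n)=2n-1$, so an $E$-extremal sequence for $\mathbf 1$ also has length $2n-2$. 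Next I would convert each weighted condition into a plain one via Observation \ref{star}. For $(b)$ and $(c)$, since $n$ is a multiple of $n$, a $(\mathbf 1,\mathbf 1)$- (resp. $\mathbf 1$-) weighted zero-sum subsequence of length $n$ is exactly a zero-sum subsequence of length $n$, so each of $(b)$ and $(c)$ coincides with $(\mathrm P)$. For $(a)$, Observation \ref{star} says a $(\mathbf 1,\mathbf 1)$-weighted zero-sum subsequence is a zero-sum subsequence whose length is a positive multiple of $n$; inside a sequence of length $2n-2$ the only such length is $n$ (as $2n>2n-2$), so $(a)$ also coincides with $(\mathrm P)$. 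This establishes $(a)\Leftrightarrow(b)\Leftrightarrow(c)$ with no work beyond Observation \ref{star}.

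It then remains to prove $(\mathrm P)\Leftrightarrow(d)$, which is the inverse form of the Erd\H{o}s--Ginzburg--Ziv theorem. The direction $(d)\Rightarrow(\mathrm P)$ is a short verification: a length-$n$ subsequence of $S'$ uses $j$ ones with $1\le j\le n-1$, and its sum $j$ is nonzero in $\mathbb Z_n$, so $S'$ satisfies $(\mathrm P)$. Moreover $(\mathrm P)$ is preserved by the operations appearing in $(d)$: multiplying every term by a unit $u\in U(n)$ scales every subsequence sum by $u$, preserving both vanishing and all lengths, and translating by $x$ changes the sum of any length-$n$ subsequence by $nx=0$. (Alternatively one invokes Remark \ref{eq} for the translation-and-equivalence invariance of $E$-extremality for $\mathbf 1$.) Hence every translate of a sequence equivalent to $S'$ satisfies $(\mathrm P)$.

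The hard direction, and the main obstacle, is $(\mathrm P)\Rightarrow(d)$: I must show that \emph{every} sequence of length $2n-2$ with no zero-sum subsequence of length $n$ has the form $a^{n-1}b^{n-1}$ with $a-b\in U(n)$. My plan is as follows. Using the translation- and unit-invariance just noted, I may normalize $S$ so that a most frequent value is $0$. I would first settle the prime case $n=p$: if such an $S$ had three or more distinct values, or two values with multiplicities other than $p-1$ each, then a Cauchy--Davenport/sumset argument on the set of sums attainable by length-$p$ subsequences would show that $0$ is attainable, contradicting $(\mathrm P)$; hence $S$ consists of two values each of multiplicity $p-1$, and the absence of a length-$p$ zero-sum forces their difference to be a unit. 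For composite $n$ I would induct on the number of prime factors of $n$: fixing a prime $p\mid n$, writing $n=pm$, I would repeatedly extract length-$p$ subsequences that are zero-sum modulo $p$ (available by the prime case of Erd\H{o}s--Ginzburg--Ziv over $\mathbb Z_p$), pass to the quotient, apply the induction hypothesis in $\mathbb Z_m$, and track the rigid two-value structure back up to $\mathbb Z_n$.

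The delicate point throughout is precisely this inverse/structural step: one must show that the attainable-sum set is all of $\mathbb Z_p$ unless $S$ is already two-valued, and, in the composite case, that the extremal configuration is rigid enough to survive the reduction and recombine into a single pair $a^{n-1}b^{n-1}$. This is exactly the content of the inverse Erd\H{o}s--Ginzburg--Ziv theorem, and if a self-contained treatment is not wanted it may instead be cited as a known result. Combining $(a)\Leftrightarrow(b)\Leftrightarrow(c)\Leftrightarrow(\mathrm P)$ with $(\mathrm P)\Leftrightarrow(d)$ then yields the full chain of equivalences.
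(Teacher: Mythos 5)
Your proposal is correct and follows essentially the same route as the paper: both funnel $(a)$, $(b)$, $(c)$ through the single condition ``length $2n-2$ with no zero-sum subsequence of length $n$'' via Observation \ref{star} and the values $D_{\mathbf 1,\mathbf 1}(n)=E_{\mathbf 1,\mathbf 1}(n)=E_{\mathbf 1}(n)=2n-1$, and both settle $(d)$ by the inverse Erd\H{o}s--Ginzburg--Ziv characterization, which the paper simply cites as Lemma~4 of \cite{BD}. Your Cauchy--Davenport-plus-induction sketch of that inverse theorem is only an outline (the composite-case rigidity step is not carried out), so in practice you should take your own suggested fallback and cite it, exactly as the paper does.
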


\begin{proof}
By Theorem \ref{e'1} we see that $D_{\mathbf 1,\mathbf 1}(n)=E_{\mathbf 1,\mathbf 1}(n)=2n-1$. So by using Observation \ref{star} we see that $(a)$ and $(b)$ are equivalent. From \cite{EGZ} we see that $E_\mathbf 1(n)=2n-1=E_{\mathbf 1,\mathbf 1}(n)$. Thus, by Observation \ref{star} it follows that $(b)$ and $(c)$ are equivalent. By Lemma 4 of \cite{BD} we see that $(c)$ and $(d)$ are equivalent.
\end{proof}

\begin{theorem}\label{c'1}
We have $C_{\mathbf 1,\mathbf 1}(n)=n^2$.
\end{theorem}

\begin{proof}
From \eqref{ubzn} we see that $C_{\mathbf 1,\mathbf 1}(n)\leq n^2$. Consider the sequence
\[S=(\,\underbrace{0,\ldots,0}_{n-1},1,\,\underbrace{0,\ldots,0}_{n-1},1,\ldots,1,\,\underbrace{0,\ldots,0}_{n-1}\,)\]
in $\mathbb Z_n$ having length $n^2-1$ (in which there are exactly $n-1$ ones). We observe that if $T$ is a subsequence of consecutive terms of $S$ and if $T$ has length $n$, then $T$ has exactly one non-zero term.

So we see that $S$ does not have any zero-sum subsequence of consecutive terms whose length is a multiple of $n$. Thus, from Observation \ref{star} it follows that $S$ does not have any $(\mathbf 1,\mathbf 1)$-weighted zero-sum subsequence of consecutive terms. Hence, we conclude that $C_{\mathbf 1,\mathbf 1}(n)=n^2$.
\end{proof}

Let $S'=(x_1,\ldots,x_{n-1})$ be sequence in $\mathbb Z_n$ which is a $C$-extremal sequence for $\mathbf 1$. (Such sequences have been characterized in Theorem 2 of \cite{SKS}.) We note that all the $x_i$'s are non-zero. Consider the sequence
\begin{equation}\label{cext}
S=(\,\underbrace{0,\ldots,0}_{n-1},x_1,\,\underbrace{0,\ldots,0}_{n-1},x_2,\ldots,x_{n-1},\,\underbrace{0,\ldots,0}_{n-1}\,).
\end{equation}
We observe that if $T$ is a subsequence of consecutive terms of $S$ of length $n$, then $T$ has exactly one non-zero term.

Since $S'$ does not have any zero-sum subsequence of consecutive terms, we see that $S$ does not have any zero-sum subsequence of consecutive terms whose length is a multiple of $n$. Hence, from Observation \ref{star} it follows that $S$ is a $C$-extremal sequence for $(\mathbf 1,\mathbf 1)$.

\begin{remark}\label{c11}
There are $C$-extremal sequences for $(\mathbf 1,\mathbf 1)$ in $\mathbb Z_n$ which are not of the form as in \eqref{cext}. For example, by using Observation \ref{star} we can check that the sequence $(1,0,1)$ in $\mathbb Z_2$ is a $C$-extremal sequence for $(\mathbf 1,\mathbf 1)$. Also, we see that the sequences $(0,1,0,0,2,2,0,0)$ and $(0,1,0,0,1,0,0,1)$ in $\mathbb Z_3$ are $C$-extremal sequences for $(\mathbf 1,\mathbf 1)$.
\end{remark}

\section{$(\mathbb Z_n',\mathbf 1)$-weighted zero-sum constants}\label{sz1}

\begin{observation}\label{2t}
Let $S$ be a sequence in $\mathbb Z_n$ which has a repeated term $x$. Then we see that $(x,x)$ is a $(\mathbb Z_n',\mathbf 1)$-weighted zero-sum subsequence of $S$.
\end{observation}

\begin{observation}\label{3d}
Let $S=(x,y,z)$ be a sequence in $\mathbb Z_n$  whose terms are pairwise distinct. Let $a=y-z$, $b=z-x$, and $c=x-y$. Since $ax+by+cz=0$ and $a+b+c=0$, it follows that $S$ is a $(\mathbb Z_n',\mathbf 1)$-weighted zero-sum sequence.
\end{observation}

\begin{theorem}\label{dnz}
We have $D_{\mathbb Z_n',\mathbf 1}=3$.
\end{theorem}

\begin{proof}
Let $S$ be a sequence in $\mathbb Z_n$ of length three. By Observations \ref{2t} and \ref{3d} we see that $S$ has a $(\mathbb Z_n',\mathbf 1)$-weighted zero-sum subsequence. Hence, it follows that $D_{\mathbb Z_n',\mathbf 1}\leq 3$.

Consider the sequence $S=(0,1)$ in $\mathbb Z_n$. The only $\mathbb Z_n'$-weighted zero-sum subsequence of $S$ is $T=(0)$. However, $T$ is not a $(\mathbb Z_n',\mathbf 1)$-weighted zero-sum sequence. Thus, it follows that $D_{\mathbb Z_n',\mathbf 1}=3$.
\end{proof}

\begin{theorem}\label{cnz}
We have $C_{\mathbb Z_n',\mathbf 1}=4$.
\end{theorem}

\begin{proof}
Consider the sequence $S=(0,1,0)$ in $\mathbb Z_n$. The only $\mathbb Z_n'$-weighted zero-sum subsequence of consecutive terms of $S$ is $T=(0)$. However, $T$ is not a $(\mathbb Z_n',\mathbf 1)$-weighted zero-sum sequence. Thus, it follows that $C_{\mathbb Z_n',\mathbf 1}\geq 4$.

Let $S$ be a sequence in $\mathbb Z_n$ of length four. We claim that $S$ has a $(\mathbb Z_n',\mathbf 1)$-weighted zero-sum subsequence having consecutive terms. It will then follow that $C_{\mathbb Z_n',\mathbf 1}=4$. By Observation \ref{2t}, we may assume that no two consecutive terms of $S$ are equal.

Let $S=(x,y,z,w)$. Suppose $x\neq z$. By Observation \ref{3d}, we see that $(x,y,z)$ is a $(\mathbb Z_n',\mathbf 1)$-weighted zero-sum sequence. So we may assume that $x=z$. By a similar argument we may also assume that $y=w$. Since $x+y-z-w=0$, we see that $S$ is a $(\mathbb Z_n',\mathbf 1)$-weighted zero-sum sequence. Hence, our claim follows.
\end{proof}

\begin{definition}
Suppose $T$ is a subsequence of a sequence $S=(x_1,\ldots,x_k)$ and $J=\{i:x_i$ is a term of $T\}$. Let $l=k-|J|$ and let $f:[1,l]\to [1,k]\setminus J$ be the unique increasing bijection. Then $S-T$ denotes the sequence $\big(x_{f(1)},\ldots,x_{f(l)}\big)$. For example, if $S=(1,2,1,4,2)$ and $T=(2,4)$, then $S-T=(1,1,2)$.
\end{definition}

\begin{definition}
Let $S$ and $T$ be sequences in $\mathbb Z_n$ of lengths $k$ and $l$ respectively. Then $S+T$ denotes the sequence in $\mathbb Z_n$ of length $k+l$ which is obtained by concatenating the sequences $S$ and $T$.
\end{definition}

\begin{lemma}\label{znzs}
Let $S$ be a sequence in $\mathbb Z_n$ where $n\geq 3$. Suppose at least two terms of $S$ are units. Then $S$ is a $\mathbb Z_n'$-weighted zero-sum sequence.
\end{lemma}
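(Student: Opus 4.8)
The plan is to designate the two unit terms as the only nontrivially-weighted entries and push the trivial weight $1$ onto everything else, reducing the whole problem to one linear condition. First I would fix indices $i\neq j$ with $x_i=u$ and $x_j=v$ both units, and assign the weight $1\in\mathbb Z_n'$ to every term of $S$ other than $x_i$ and $x_j$. Writing $P$ for the sum of all these remaining terms, the sequence $S$ becomes a $\mathbb Z_n'$-weighted zero-sum sequence precisely when I can find $a,b\in\mathbb Z_n'$ with $au+bv=-P$. (The case where $S$ has length two with both terms units is included, via the empty sum $P=0$.)

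Next I would use that $u$ is a unit to solve for $a$ in terms of $b$: for any $b$, set $a=u^{-1}(-P-bv)$, which satisfies $au+bv=-P$ automatically. The only remaining requirement is that both $a$ and $b$ be nonzero. Since $v$ is a unit, $a=0$ occurs exactly when $b=-Pv^{-1}$, so I must choose a nonzero $b$ that also avoids the single value $-Pv^{-1}$.

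This counting step is where the hypothesis $n\neq 2$ enters, and it is the heart of the argument. The set $\mathbb Z_n'$ has $n-1\geq 2$ elements, so even after discarding the at most one forbidden value $-Pv^{-1}$, a legitimate nonzero choice of $b$ survives; the corresponding $a$ is then nonzero by construction. I expect the main point to get right is exactly this cardinality bound: for $n=2$ one has $\mathbb Z_2'=\{1\}$, which is too small, and the claim genuinely fails there (for instance $(1,1,1)$ is not $\mathbb Z_2'$-weighted zero-sum despite having unit terms). So the write-up must make visible that $n\geq 3$ is used precisely at the step excluding one nonzero value.
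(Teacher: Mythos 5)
Your proof is correct, and it follows the same reduction as the paper's: both arguments put the weight $1$ on every term other than the two units $u,v$ and reduce the lemma to showing that every element of $\mathbb Z_n$ (in particular $-P$) can be written as $au+bv$ with $a,b\in\mathbb Z_n'$. Where you diverge is in how this surjectivity is established. The paper forms the sets $u\,\mathbb Z_n'$ and $v\,\mathbb Z_n'$, each of size $n-1$, and invokes the Cauchy--Davenport theorem (in the form stated in its reference \cite{SKS1}) to get $u\,\mathbb Z_n'+v\,\mathbb Z_n'=\mathbb Z_n$ from $2(n-1)-1=2n-3\geq n$ for $n\geq 3$. You instead prove the same sumset identity by hand: parametrize $a=u^{-1}(-P-bv)$, observe that $a=0$ forces the single value $b=-Pv^{-1}$, and use $|\mathbb Z_n'|=n-1\geq 2$ to pick a nonzero $b$ avoiding it. Your route is more elementary and self-contained --- it removes any dependence on a sumset theorem whose validity over the composite ring $\mathbb Z_n$ requires the particular cited formulation --- while the paper's version buys brevity and situates the lemma within a standard additive-combinatorics toolkit. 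You also correctly isolate where $n\neq 2$ enters (both proofs use it at the same cardinality threshold: your $n-1\geq 2$ is the paper's $2n-3\geq n$), and your counterexample $(1,1,1)$ in $\mathbb Z_2$ is valid, complementing the paper's example $(1,1,1,0)$ given after Lemma \ref{z'}. One cosmetic point: your phrase ``the case where $S$ has length two\ldots via the empty sum $P=0$'' is fine, but note that when $P=0$ the forbidden value $-Pv^{-1}=0$ is already excluded from $\mathbb Z_n'$, so in that case every $b\in\mathbb Z_n'$ works; stating this explicitly would make the ``at most one forbidden value'' count airtight.
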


\begin{proof}
By permuting the terms of $S$, we may assume that $S=(x,y,x_3,\ldots,x_m)$ where $x$ and $y$ are units. Let $A=x\,\mathbb Z_n'$ and $B=y\,\mathbb Z_n'$. Since $x$ and $y$ are units, we see that $A=B=\mathbb Z_n'$. Since $n\geq 3$, there exist $u,v\in \mathbb Z_n'$ such that $u\neq v$.

Since $A+u=\mathbb Z_n \setminus \{u\}$ and $A+v=\mathbb Z_n \setminus \{v\}$, we see that $A+B=\mathbb Z_n$ and hence $x\,\mathbb Z_n'+y\,\mathbb Z_n'=\mathbb Z_n$. Thus, there exist $a,b\in \mathbb Z_n'$ such that $ax+by+x_3+\cdots +x_m=0$. It follows that $S$ is a $\mathbb Z_n'$-weighted zero-sum sequence.
\end{proof}

\begin{remark}\label{znzd}
Let $S$ be a sequence in $\mathbb Z_n$. Suppose every non-zero term of $S$ is a zero-divisor. Then we see that $S$ is a $\mathbb Z_n'$-weighted zero-sum sequence.
\end{remark}

\begin{lemma}\label{z'}
Let $S$ be a sequence in $\mathbb Z_n$ where $n\geq 3$. Suppose $S$ has at least two non-zero terms. Then $S$ is a $\mathbb Z_n'$-weighted zero-sum sequence.
\end{lemma}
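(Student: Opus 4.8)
The plan is to split into cases according to how many terms of $S$ are units, so as to reduce as much as possible to the two results already in hand. First I would dispose of the two extreme cases. If at least two terms of $S$ are units, then Lemma \ref{znzs} immediately shows that $S$ is a $\mathbb Z_n'$-weighted zero-sum sequence. If no term of $S$ is a unit, then every non-zero term is a zero-divisor, and Remark \ref{znzd} finishes the argument. Thus the whole content of the lemma lies in the remaining case, where exactly one term of $S$ is a unit.

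In that case, write the unit term as $u$ and note that, because $S$ has at least two non-zero terms while only one of them is a unit, $S$ has at least one non-zero zero-divisor. Denote the non-zero zero-divisors of $S$ by $d_1,\ldots,d_j$ with $j\geq 1$, and assign the weight $1$ to every term of $S$ that equals $0$, so those terms contribute nothing. It then suffices to choose weights $a,b_1,\ldots,b_j\in\mathbb Z_n'$ with $au+b_1d_1+\cdots+b_jd_j=0$. Since $u$ is a unit, the map $a\mapsto au$ carries $\mathbb Z_n'$ onto $\mathbb Z_n'$, so such an $a$ exists precisely when $b_1d_1+\cdots+b_jd_j$ can be made a non-zero element of $\mathbb Z_n$ for some choice of $b_1,\ldots,b_j\in\mathbb Z_n'$.

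The main (and only slightly delicate) point is therefore to produce non-zero weights making the zero-divisor part non-zero. I would try $b_1=\cdots=b_j=1$ first; if $d_1+\cdots+d_j\neq 0$, then we are done. Otherwise I would exploit the hypothesis $n\geq 3$, which guarantees $2\in\mathbb Z_n'$, and change the single weight $b_1$ from $1$ to $2$: the sum then becomes $2d_1+d_2+\cdots+d_j=d_1+(d_1+d_2+\cdots+d_j)=d_1\neq 0$. Either way $b_1d_1+\cdots+b_jd_j\neq 0$, and solving $au=-(b_1d_1+\cdots+b_jd_j)$ for $a$ yields the required unit weight, since the negative of a non-zero element multiplied by $u^{-1}$ is again non-zero.

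Packaging these three cases completes the proof. I do not expect a serious obstacle: the hypothesis $n\neq 2$ enters in exactly the two places one would anticipate, namely where Lemma \ref{znzs} needs it and where $2$ must be available as a valid non-zero weight in the perturbation step.
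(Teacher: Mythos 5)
Your proof is correct, and its outer structure coincides with the paper's: both of you dispose of the case of at least two unit terms via Lemma \ref{znzs} and the case of no unit terms via Remark \ref{znzd}, reducing to the case of exactly one unit. In that residual case, however, your mechanism is genuinely different. The paper pairs the unique unit $x$ with a chosen non-zero term $y$ and uses the commutativity trick $yx+(-x)y=0$ (both weights $y$ and $-x$ lie in $\mathbb Z_n'$), then handles the remaining terms, all of which are zero or zero-divisors, by a second appeal to Remark \ref{znzd}. You instead weight the zeros by $1$, choose weights in $\{1,2\}$ on the non-zero non-unit terms so that their weighted sum is non-zero --- your perturbation $b_1\colon 1\mapsto 2$ does exactly what you claim when the plain sum $d_1+\cdots+d_j$ vanishes, since it changes the sum to $d_1\neq 0$ --- and then solve $au=-(b_1d_1+\cdots+b_jd_j)$ for $a$, which is non-zero because multiplication by the unit $u$ permutes $\mathbb Z_n'$. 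Both arguments are sound; the trade-off is instructive. Your route invokes $n\geq 3$ a second time (to guarantee $2\in\mathbb Z_n'$), whereas in the paper the hypothesis $n\neq 2$ enters only through Lemma \ref{znzs}, so the paper makes it transparent that the failure at $n=2$ (e.g.\ the sequence $(1,1,1,0)$) is located entirely in the many-units case. Conversely, your one-unit argument never uses that the non-unit terms are zero-divisors --- only that they are non-zero and that a unit coefficient can absorb any non-zero target --- so it is slightly more self-contained in that case, dispensing with the second use of Remark \ref{znzd}.
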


\begin{proof}
If no term of $S$ is a unit, then we are done by Remark \ref{znzd}. If $S$ has at least two units, we are done by Lemma \ref{znzs}. So we may assume that exactly one term of $S$ is a unit.

Thus, there exist $x,y\in \mathbb Z_n'$ such that $(x,y)$ is a subsequence of $S$ and no term of $S-(x,y)$ is a unit. By Remark \ref{znzd} we see that $S-(x,y)$ is a $\mathbb Z_n'$-weighted zero-sum sequence. Also, $(x,y)$ is a $\mathbb Z_n'$-weighted zero-sum sequence since $yx-xy=0$. Hence, we are done.
\end{proof}

By considering the sequence $(1,1,1,0)$ in $\mathbb Z_2$, we see that the statements of Lemmas \ref{znzs} and \ref{z'} do not hold when $n=2$.

\begin{theorem}\label{enz}
We have $E_{\mathbb Z_n',\mathbf 1}=n+1$ when $n\neq 3$.
\end{theorem}

\begin{proof}
By Theorem 6.1 of \cite{ACFKP} we see that $E_{\mathbb Z_n'}=n+1$. Hence, by Observation \ref{cca} it follows that $E_{\mathbb Z_n',\mathbf 1}\geq n+1$.

Since every sequence of length three in $\mathbb Z_2$ has a term which is repeated, by Observation \ref{2t} it follows that $E_{\mathbb Z_2',\mathbf 1}=3$. Let $n\geq 4$ and let $S$ be a sequence in $\mathbb Z_n$ of length $n+1$. Since $S$ has length $n+1$, there is a term $x$ which is repeated. If $S-x$ has a $(\mathbb Z_n',\mathbf 1)$-weighted zero-sum subsequence of length $n$, by Observation \ref{tr}, we see that $S$ has a $(\mathbb Z_n',\mathbf 1)$-weighted zero-sum subsequence of length $n$. So we may assume that $S$ has at least two zeroes.

Let $T$ be a subsequence of $S$ such that $S-T=(0,0)$. Then $T$ has length $n-1\geq 3$. We claim that $T$ has a $\mathbb Z_n'$-weighted zero-sum subsequence $T'$ of length $n-2$. If $T$ has at most one non-zero term, our claim is true. Suppose $T$ has at least two non-zero terms. There exists a term $y$ of $T$ such that the sequence $T-(y)$ has at least two non-zero terms. Thus, by Lemma \ref{z'} we see that $T-(y)$ is a $\mathbb Z_n'$-weighted zero-sum sequence, and our claim is true.

We may assume that $T'=(x_3,\ldots,x_n)$ and that $x_1=0=x_2$. Observe that $(0,0)+T'$ has length $n$. There exist $a_3,\ldots,a_n\in \mathbb Z_n'$ such that $a_3x_3+\cdots+a_nx_n=0$. Let $c=a_3+\cdots+a_n$. Since $n\geq 4$, we see that $\mathbb Z_n'\setminus \{c\}\neq \emptyset$. Let $d\in \mathbb Z_n'\setminus \{c\}$, let $a_1=-d$, and let $a_2=d-c$. Then $a_1+\cdots+a_n=0$ and $a_1x_1+\cdots +a_nx_n=0$ and hence $(0,0)+T'$ is a $(\mathbb Z_n',\mathbf 1)$-weighted zero-sum sequence.
\end{proof}

\begin{theorem}\label{enz3}
We have $E_{\mathbb Z_3',\mathbf 1}=5$.
\end{theorem}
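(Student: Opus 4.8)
The plan is to reduce this to the classical result $E_{\mathbf 1}(n)=2n-1$ already invoked (from \cite{EGZ}) in the proof of Theorem \ref{11}. The key point, peculiar to $n=3$, is that the doubly-weighted condition on a length-three subsequence collapses onto the ordinary zero-sum condition.

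First I would prove the following equivalence: a sequence $(x,y,z)$ in $\mathbb Z_3$ is a $(\mathbb Z_3',\mathbf 1)$-weighted zero-sum sequence if and only if $x+y+z=0$. For the forward direction, suppose there exist $a,b,c\in\mathbb Z_3'=\{1,2\}$ with $ax+by+cz=0$ and $a+b+c=0$. Enumerating the triples in $\{1,2\}^3$, the only ones with $a+b+c\equiv 0\pmod 3$ are $(1,1,1)$ and $(2,2,2)$, since any triple using both values $1$ and $2$ has term-sum $1$ or $2$. In the first case $ax+by+cz=x+y+z$, and in the second $ax+by+cz=2(x+y+z)$; as $2$ is a unit in $\mathbb Z_3$, either way $x+y+z=0$. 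Conversely, if $x+y+z=0$, the weights $a=b=c=1$ witness the claim. Hence a length-three subsequence of a sequence in $\mathbb Z_3$ is $(\mathbb Z_3',\mathbf 1)$-weighted zero-sum exactly when it is a zero-sum subsequence of length three, so $E_{\mathbb Z_3',\mathbf 1}=E_{\mathbf 1}(\mathbb Z_3)$.

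With the equivalence in hand the theorem is immediate: from $E_{\mathbf 1}(n)=2n-1$ we get $E_{\mathbf 1}(\mathbb Z_3)=5$, hence $E_{\mathbb Z_3',\mathbf 1}=5$. If one prefers a self-contained argument, the lower bound $E_{\mathbb Z_3',\mathbf 1}\geq 5$ follows from the length-four sequence $(0,0,1,1)$, whose only length-three subsequences are $(0,0,1)$ and $(0,1,1)$, with term-sums $1$ and $2$; by the equivalence neither is $(\mathbb Z_3',\mathbf 1)$-weighted zero-sum. The matching upper bound is exactly the assertion that every length-five sequence in $\mathbb Z_3$ contains a zero-sum subsequence of length three, which is $E_{\mathbf 1}(\mathbb Z_3)=5$.

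I expect no real obstacle here: the entire content is the observation that, for $n=3$, the constraint $a+b+c=0$ with $a,b,c\in\{1,2\}$ forces $a=b=c$, so the weights cannot do anything beyond a global scaling by a unit. This also explains why $n=3$ is the value excluded from Theorem \ref{enz}: the answer here is $n+2=5$ rather than $n+1=4$.
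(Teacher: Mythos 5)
Your proposal is correct, and it takes a genuinely different route from the paper. The paper argues directly: for the upper bound, a sequence of length five in $\mathbb Z_3$ either has three distinct terms, in which case Lemma \ref{3d} produces the required length-three subsequence, or has some term repeated thrice, and $(x,x,x)$ works since $x+x+x=0$ and $1+1+1=0$ in $\mathbb Z_3$; for the lower bound it exhibits $(0,0,1,1)$ and checks by hand that its unique $\mathbb Z_3'$-weighted zero-sum subsequence of length three, namely $(0,1,1)$, fails the condition $a+b+c=0$. You instead prove the structural equivalence that a length-three sequence in $\mathbb Z_3$ is a $(\mathbb Z_3',\mathbf 1)$-weighted zero-sum sequence if and only if it is an ordinary zero-sum sequence --- your enumeration is right: the only triples in $\{1,2\}^3$ with $a+b+c\equiv 0\pmod 3$ are the constant ones, which act by a unit scaling --- and then quote $E_{\mathbf 1}(3)=5$ from \cite{EGZ}. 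Your route is less self-contained, since it imports the Erd\H{o}s--Ginzburg--Ziv theorem as a black box, whereas the paper's case split (three distinct terms, or a thrice-repeated term) is in effect an elementary proof of that theorem for $n=3$; but it buys two things. First, it pinpoints conceptually why $n=3$ is excluded from Theorem \ref{enz}: the weight set $\mathbb Z_3'=\{\pm 1\}$ admits no nonconstant zero-sum triple, so the weights can do nothing beyond global rescaling. Second, your equivalence, combined with Lemma 4 of \cite{BD} as already used in Theorem \ref{11}, immediately yields the paper's subsequent characterization of the $E$-extremal sequences for $(\mathbb Z_3',\mathbf 1)$ as translates of sequences equivalent to $(1,1,0,0)$, which the paper proves by a separate argument.
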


\begin{proof}
Let $S=(x_1,\ldots,x_5)$ be a sequence in $\mathbb Z_3$. If we show that $S$ has a $(\mathbb Z_3',\mathbf 1)$-weighted zero-sum subsequence of length three, it will follow that $E_{\mathbb Z_3',\mathbf 1}\leq 5$.

If $S$ has at least three distinct terms, by Observation \ref{3d} we see that $S$ has a $(\mathbb Z_3',\mathbf 1)$-weighted zero-sum subsequence of length three. If $S$ has at most two distinct terms, there exists $x\in\mathbb Z_3$ such that $T=(x,x,x)$ is a subsequence of $S$. Then $T$ is a $(\mathbb Z_3',\mathbf 1)$-weighted zero-sum subsequence. Hence, it follows that $E_{\mathbb Z_3',\mathbf 1}\leq 5$.

Consider the sequence $S=(0,0,1,1)$ in $\mathbb Z_3$. We can check that $T=(0,1,1)$ is the only $\mathbb Z_3'$-weighted zero-sum subsequence of length three. Since $T$ is not a $(\mathbb Z_3',\mathbf 1)$-weighted zero-sum subsequence, it follows that $E_{\mathbb Z_3',\mathbf 1}=5$.
\end{proof}

\section{Extremal sequences for $(\mathbb Z_n',\mathbf 1)$}

\begin{remark}\label{zn2}
Let $x,y \in \mathbb Z_n$ be such that $y-x$ is not a unit. So there exists $a\in \mathbb Z_n'$ such that $a(y-x)=0$ and hence $(-a)x+ay=0$. It follows that $(x,y)$ is a $(\mathbb Z_n',\mathbf 1)$-weighted zero-sum sequence.
\end{remark}

\begin{theorem}\label{ednz}
A sequence $S$ in $\mathbb Z_n$ is a $D$-extremal sequence for $(\mathbb Z_n',\mathbf 1)$ if and only if $S$ is a translate of a sequence which is equivalent to $S'=(0,1)$.
\end{theorem}

\begin{proof}
Let $S$ be a $D$-extremal sequence for $(\mathbb Z_n',\mathbf 1)$. By Theorem \ref{dnz} we see that $D_{\mathbb Z_n',\mathbf 1}=3$. It follows that $S$ has length two. Let $S=(x,y)$ and let $u=y-x$. By Remark \ref{zn2} we see that $u$ is a unit. It follows that $S=(0,u)+x$ and that $(0,u)$ is equivalent to $S'=(0,1)$.

Let $S$ be a translate of a sequence which is equivalent to $S'=(0,1)$. The only $\mathbb Z_n'$-weighted zero-sum subsequence of $S'=(0,1)$ is $T=(0)$. Since $T$ is not a $(\mathbb Z_n',\mathbf 1)$-weighted zero-sum sequence and since $D_{\mathbb Z_n',\mathbf 1}=3$, it follows that $S'$ is a $D$-extremal sequence for $(\mathbb Z_n',\mathbf 1)$. Thus, by Observation \ref{tr} and Remark \ref{eq} we see that $S$ is a $D$-extremal sequence for $(\mathbb Z_n',\mathbf 1)$.
\end{proof}

\begin{theorem}\label{ecnz}
A sequence $S$ in $\mathbb Z_n$ is a $C$-extremal sequence for $(\mathbb Z_n',\mathbf 1)$ if and only if $S$ is a translate of a sequence which is order-equivalent to $S'=(0,1,0)$.
\end{theorem}

\begin{proof}
Let $S$ be a $C$-extremal sequence for $(\mathbb Z_n',\mathbf 1)$. By Theorem \ref{cnz} we see that $C_{\mathbb Z_n',\mathbf 1}=4$. It follows that $S$ has length three. Let $S=(x,y,z)$, let $u=y-x$, and let $v=y-z$. By Remark \ref{zn2} we see that $u$ and $v$ are units. By Observation \ref{3d} we see that $z=x$ and hence $u=v$.  It follows that $S=(0,u,0)+x$ and that $(0,u,0)$ is order-equivalent to $S'=(0,1,0)$.

Let $S$ be a translate of a sequence which is order-equivalent to $S'=(0,1,0)$. The only $\mathbb Z_n'$-weighted zero-sum subsequence of consecutive terms of $S'$ is $T=(0)$. Since $T$ is not a $(\mathbb Z_n',\mathbf 1)$-weighted zero-sum sequence and since $C_{\mathbb Z_n',\mathbf 1}=4$, it follows that $S'$ is a $C$-extremal sequence for $(\mathbb Z_n',\mathbf 1)$. Thus, by Observation \ref{tr} and Remark \ref{eq} we see that $S$ is a $C$-extremal sequence for $(\mathbb Z_n',\mathbf 1)$.
\end{proof}

\begin{theorem}\label{eenz}
Let $n\neq 3$. A sequence $S$ in $\mathbb Z_n$ is an $E$-extremal sequence for $(\mathbb Z_n',\mathbf 1)$ if and only if $S$ is a translate of an $E$-extremal sequence for $\mathbb Z_n'$.
\end{theorem}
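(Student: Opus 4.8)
The plan is to first translate the statement into a concrete combinatorial condition. By Theorem~\ref{enz} an $E$-extremal sequence for $(\mathbb Z_n',\mathbf 1)$ has length $E_{\mathbb Z_n',\mathbf 1}-1=n$, so its only length-$n$ subsequence is itself; thus $S$ is $E$-extremal for $(\mathbb Z_n',\mathbf 1)$ exactly when $S$ fails to be a $(\mathbb Z_n',\mathbf 1)$-weighted zero-sum sequence. Using the characterization in the preceding remark (Theorem 1 of \cite{AMP}), being a translate of an $E$-extremal sequence for $\mathbb Z_n'$ means that $S$ is a translate of a sequence equivalent to $(0,\ldots,0,1)$; concretely, $S$ consists of $n-1$ equal terms together with one further term that differs from them by a unit. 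So the theorem reduces to the assertion that a length-$n$ sequence fails to be $(\mathbb Z_n',\mathbf 1)$-weighted zero-sum if and only if it has this special shape. Throughout I will use that both properties are translation-invariant: the $(\mathbb Z_n',\mathbf 1)$ property by Observation~\ref{tr}, and the special shape by its very definition.

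For the \emph{if} direction I would first check the base sequence $S_0=(0,\ldots,0,1)$. It is not a $\mathbb Z_n'$-weighted zero-sum sequence, since any weighted sum of its terms equals the (nonzero) coefficient of its unique $1$; as every $(\mathbb Z_n',\mathbf 1)$-weighted zero-sum sequence is in particular a $\mathbb Z_n'$-weighted zero-sum sequence, $S_0$ is not $(\mathbb Z_n',\mathbf 1)$-weighted zero-sum either, and hence $S_0$ is $E$-extremal for $(\mathbb Z_n',\mathbf 1)$. Since every $E$-extremal sequence for $\mathbb Z_n'$ is equivalent to $S_0$, Remark~\ref{eq} (in its form for $E$ and $(A,\mathbf 1)$) immediately upgrades this to the desired conclusion: every translate of an $E$-extremal sequence for $\mathbb Z_n'$ is $E$-extremal for $(\mathbb Z_n',\mathbf 1)$.

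For the \emph{only if} direction I would argue the contrapositive: assuming $S$ does not have the special shape, I would produce weights $a_1,\ldots,a_n\in\mathbb Z_n'$ with $\sum a_ix_i=0$ and $\sum a_i=0$. After translating so that a most-frequent value is $0$, of multiplicity $k$, the forbidden shape says precisely that it is \emph{not} the case that $k=n-1$ with the remaining term a unit, and I would split on $k$. If $k=n$, so all terms are $0$, the weights $a_i=1$ work since $n\cdot 1=0$. If $k=n-1$, the odd term $y$ must be a nonzero zero-divisor, so some $a\in\mathbb Z_n'$ annihilates it; I assign that $a$ to $y$ and fill the $n-1$ zeros with nonzero weights of the required total, which is possible because $n\ge 4$. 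If $2\le k\le n-2$, there are at least two zeros and at least two nonzero terms, so I delete two zeros, apply Lemma~\ref{z'} to the rest to obtain a $\mathbb Z_n'$-weighted relation with coefficient sum $c$, and then reinstate the two deleted zeros with weights $-d$ and $d-c$ for some $d\in\mathbb Z_n'\setminus\{c\}$, exactly as in the proof of Theorem~\ref{enz}, forcing the weight-sum to vanish while keeping all weights nonzero.

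The remaining case $k=1$ is the main obstacle: here all $n$ terms are distinct, so $S$ is a permutation of $\mathbb Z_n$ and there is no spare zero-weight to exploit. I would resolve it by direct construction. Taking all weights equal to $1$ gives weight-sum $n\equiv 0$ and weighted sum $\tfrac{n(n-1)}2$, which vanishes when $n$ is odd and disposes of that subcase. When $n$ is even one has $\tfrac{n(n-1)}2\equiv\tfrac n2\neq 0$, and I would correct this by a small explicit perturbation that preserves the weight-sum; for instance the weights $a_0=a_1=1+\tfrac n2$ and $a_j=1$ otherwise are all nonzero for $n\ge 4$ and make both sums vanish. Verifying these constructions is a routine computation; the conceptual content of the argument lies entirely in isolating the permutation case and spotting the parity obstruction that distinguishes odd from even $n$ there.
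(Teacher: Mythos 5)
Your proposal is correct -- the reduction, both directions, and each of the multiplicity cases check out -- but your forward direction takes a genuinely different route from the paper's. The paper never uses the classification from Theorem 1 of \cite{AMP}: starting from an extremal $S$, it translates a repeated term to $0$, permutes to $S_2=(x_1,\ldots,x_{n-2},0,0)$, observes that if $S_2$ were a $\mathbb Z_n'$-weighted zero-sum sequence then the two-zero adjustment from the proof of Theorem \ref{enz} (pick $d\in\mathbb Z_n'\setminus\{c\}$ and assign weights $-d$ and $d-c$ to the zeroes) would upgrade it to a $(\mathbb Z_n',\mathbf 1)$-weighted zero-sum sequence, contradicting extremality, and then concludes directly from $E_{\mathbb Z_n'}=n+1$ that $S_2$ is $E$-extremal for $\mathbb Z_n'$ -- no shape analysis at all. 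You instead import the explicit shape from \cite{AMP} ($n-1$ equal terms plus one term differing by a unit) and prove the contrapositive by cases on the maximal multiplicity $k$: your case $2\le k\le n-2$ is exactly the paper's two-zero trick via Lemma \ref{z'}, while $k=n$, $k=n-1$, and $k=1$ get direct constructions. The trade-off: the paper's argument is shorter and needs only $E_{\mathbb Z_n'}=n+1$ from \cite{ACFKP}, whereas yours leans on the stronger input from \cite{AMP} but is fully constructive -- and, notably, is sound at $n=2$, where the paper's dismissal of the all-distinct case (in effect, all weights equal to $2$, read off from $2\,(0+1+\cdots+n-1)=(n-1)n$) silently fails because $2=0$ in $\mathbb Z_2$; there the all-distinct sequence $(0,1)$ genuinely is extremal and is itself $E$-extremal for $\mathbb Z_2'$, which your case analysis accommodates. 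Three points of polish: in the permutation case the uniform choice $a_i=2$ works for both parities at once (valid for all $n\ge 3$), so your parity split with the $1+\frac n2$ perturbation, though correct, is unnecessary; you should note explicitly that for $n=2$ your $k=1$ case is vacuous (there $k=1=n-1$ and $(0,1)$ has the special shape), so the restriction $n\ge 4$ on the perturbation never bites; and the step ``fill the $n-1$ zeroes with nonzero weights of the required total'' deserves its one-line verification (set all but two of the weights equal to $1$, choose the penultimate weight in $\mathbb Z_n'$ avoiding the residual sum, and solve for the last), which succeeds since $n\ge 3$.
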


\begin{proof}
Let $T$ be an $E$-extremal sequence for $\mathbb Z_n'$ and let $S$ be a translate of $T$. By Theorem 6.1 of \cite{ACFKP} we have $E_{\mathbb Z_n'}=n+1$, and hence we see that $T$ has length $n$. Thus, it follows that $T$ is not a $\mathbb Z_n'$-weighted zero-sum sequence, and hence $T$ is not a $(\mathbb Z_n',\mathbf 1)$-weighted zero-sum sequence. By Theorem \ref{enz} we have $E_{\mathbb Z_n',\mathbf 1}=n+1$. So we see that $T$ is an $E$-extremal sequence for $(\mathbb Z_n',\mathbf 1)$. Thus, by Observation \ref{tr} we see that $S$ is an $E$-extremal sequence for $(\mathbb Z_n',\mathbf 1)$.

Let $S$ be an $E$-extremal sequence for $(\mathbb Z_n',\mathbf 1)$. Since $E_{\mathbb Z_n',\mathbf 1}=n+1$, we see that $S$ has length $n$. When $n=2$, by Observation \ref{2t} we see that $S=(0,1)$ or $(1,0)$. We observe that these are $E$-extremal sequences for $\mathbb Z_2'$. So we may assume that $n\geq 4$. Suppose all the terms of $S$ are distinct. Then $S$ is a permutation of the sequence $(0,1,2,\ldots,n-1)$. Since $2\in \mathbb Z_n'$ and since we have
\[2\,(0+1+2+\cdots+n-1)=(n-1)n=0,\]
we see that $S$ is a $(\mathbb Z_n',\mathbf 1)$-weighted zero-sum sequence. This contradicts that $S$ is an $E$-extremal sequence for $(\mathbb Z_n',\mathbf 1)$. It follows that there exists $x\in \mathbb Z_n$ such that $(x,x)$ is a subsequence of $S$. Let $S_1=S-x$. By Observation \ref{tr} it follows that $S_1$ is an $E$-extremal sequence for $(\mathbb Z_n',\mathbf 1)$.

We see that $S_1$ is a permutation of $S_2=(0,0,x_3,\ldots,x_n)$. Suppose $S_2$ is a $\mathbb Z_n'$-weighted zero-sum sequence. Then $(x_3,\ldots,x_n)$ is also a $\mathbb Z_n'$-weighted zero-sum sequence. By a similar argument as in the last paragraph of the proof of Theorem \ref{enz}, we see that $S_2$ is a $(\mathbb Z_n',\mathbf 1)$-weighted zero-sum sequence. As $S_1$ is a permutation of $S_2$, we get the contradiction that $S_1$ is a $(\mathbb Z_n',\mathbf 1)$-weighted zero-sum sequence. This contradiction shows that $S_2$ is not a $\mathbb Z_n'$-weighted zero-sum sequence.

Since $E_{\mathbb Z_n'}=n+1$ and since $S_2$ has length $n$, we see that $S_2$ is an $E$-extremal sequence for $\mathbb Z_n'$. As $S_1$ is a permutation of $S_2$, we see that $S_1$ is an $E$-extremal sequence for $\mathbb Z_n'$. Since $S=S_1+x$, we are done.
\end{proof}

\begin{remark}
In Theorem 1 of \cite{AMP} it is shown that a sequence $S$ in $\mathbb Z_n$ is an $E$-extremal sequence for $\mathbb Z_n'$ if and only if $S$ is equivalent to the sequence
\[(\,\underbrace{0,\ldots,0}_\text{$n-1$},1).\]
\end{remark}

\begin{theorem}
A sequence $S$ in $\mathbb Z_3$ is an $E$-extremal sequence for $(\mathbb Z_3',\mathbf 1)$ if and only if $S$ is a translate of a sequence which is equivalent to $S'=(0,0,1,1)$.
\end{theorem}

\begin{proof}
Let $S$ be an $E$-extremal sequence for $(\mathbb Z_3',\mathbf 1)$. By Theorem \ref{enz3} we see that $E_{\mathbb Z_3',\mathbf 1}=5$. It follows that $S$ has length 4. By using Observation \ref{3d} we see that $S$ has at most two distinct terms. Thus, there exists $y\in \mathbb Z_3'$ such that $S_1=S-y$ has at least two zeroes.

By Observation \ref{tr} we see that $S_1$ is an $E$-extremal sequence for $(\mathbb Z_3',\mathbf 1)$. As the sequence $(0,0,0)$ is a $(\mathbb Z_3',\mathbf 1)$-weighted zero-sum sequence, it follows that $S_1$ has exactly two zeroes. Since $S_1$ has at most two distinct terms, we see that $S_1$ is a permutation of $S_2=(0,0,x,x)$ where $x\in\mathbb Z_3'$. Since $x$ is a unit, it follows that $S_1$ is equivalent to $S'=(0,0,1,1)$ and $S=S_1+y$.

Let $S$ be a translate of a sequence which is equivalent to $S'=(0,0,1,1)$. The only $\mathbb Z_3'$-weighted zero-sum subsequence of $S'$ of length three is $T=(0,1,1)$. Suppose $T$ is a $(\mathbb Z_3',\mathbf 1)$-weighted zero-sum sequence. Then there exist $a,b,c\in \mathbb Z_3'$ such that $b+c=0$ and $a+b+c=0$. This contradicts that $a\neq 0$.

It follows that $S'$ has no $(\mathbb Z_3',\mathbf 1)$-weighted zero-sum subsequence of length three. Since $E_{\mathbb Z_3',\mathbf 1}=5$, it follows that $S'$ is an $E$-extremal sequence for $(\mathbb Z_3',\mathbf 1)$. Thus, by Observation \ref{tr} and Remark \ref{eq} we see that $S$ is an $E$-extremal sequence for $(\mathbb Z_3',\mathbf 1)$.
\end{proof}

\section{$(A,\mathbb Z_n')$-weighted zero-sum constants}

Let $x\in \mathbb Z_n\setminus \{0\}$. We note that $(x)$ is a $\mathbb Z_n'$-weighted zero-sum sequence if and only if $x$ is a zero-divisor.

\begin{observation}\label{1zn}
Let $A\subseteq \mathbb Z_n'$ and let $T=(x_1,\ldots,x_k)$ be a sequence in $\mathbb Z_n$ where $n\geq 3$. Suppose $T$ is an $A$-weighted zero-sum sequence of length $k\geq 2$. By Lemma \ref{z'} it follows that $T$ is an $(A,\mathbb Z_n')$-weighted zero-sum sequence.
\end{observation}

\begin{observation}\label{2zn}
Let $A\subseteq \mathbb Z_n'$ and let $T=(x)$ be an $A$-weighted zero-sum sequence in $\mathbb Z_n$ where $x\neq 0$. Then $T$ is an $(A,\mathbb Z_n')$-weighted zero-sum sequence, since there exists $a\in A$ such that $ax=0$.

Let $A\nsubseteq U(n)$. Then $T=(0)$ is an $(A,\mathbb Z_n')$-weighted zero-sum sequence, since there exists $a\in A$ and there exists $b\in \mathbb Z_n'$ such that $ba=0$ and $a0=0$.
\end{observation}

\begin{theorem}\label{daan}
Let $A\subseteq \mathbb Z_n'$. We have $D_A\leq D_{A,\mathbb Z_n'}\leq D_A+1$.
\end{theorem}

\begin{proof}
By Observation \ref{cca} we see that $D_A\leq D_{A,\mathbb Z_n'}$. By Theorem \ref{e'1} we see that $D_{\mathbb Z_2',\mathbb Z_2'}=3$. Also, since $D_{\mathbb Z_2'}=2$, it follows that $D_{\mathbb Z_2',\mathbb Z_2'}=D_{\mathbb Z_2'}+1$. So we may assume that $n\geq 3$.

Let $S$ be a sequence in $\mathbb Z_n$ of length $D_A+1$. Since $S$ has length at least $D_A$, we see that $S$ has an $A$-weighted zero-sum subsequence $T_1$. If $T_1$ has length one, then $S-T_1$ has an $A$-weighted zero-sum subsequence $T_2$, since $S-T_1$ has length $D_A$. Hence, $T_1+T_2$ is an $A$-weighted zero-sum subsequence of $S$ of length at least two.

Thus, we see that $S$ has an $A$-weighted zero-sum subsequence $T$ of length at least two.
By Observation \ref{1zn} we see that $T$ is an $(A,\mathbb Z_n')$-weighted zero-sum sequence. Hence, it follows that $D_{A,\mathbb Z_n'}\leq D_A+1$.
\end{proof}

\begin{theorem}\label{dan}
Let $A\subseteq \mathbb Z_n'$. We have
\[D_{A,\mathbb Z_n'}=
\begin{cases}
D_A+1, & \emph {if }A\subseteq U(n); \\
D_A, & \emph{if }A\nsubseteq U(n).
\end{cases}\]
\end{theorem}

\begin{proof}
The case when $n=2$ follows from the first paragraph of the proof of Theorem \ref{daan}. So we may assume that $n\geq 3$.

Let $A\subseteq U(n)$. There exists a sequence $S'$ in $\mathbb Z_n$ of length $D_A-1$ such that $S'$ has no $A$-weighted zero-sum subsequence. Let $S=S'+(0)$. The only $A$-weighted zero-sum subsequence of $S$ is $T=(0)$.

Since $A\subseteq U(n)$, we see that $T$ is not an $(A,\mathbb Z_n')$-weighted zero-sum sequence. Thus, we see that $S$ does not have any $(A,\mathbb Z_n')$-weighted zero-sum subsequence. It follows that $D_{A,\mathbb Z_n'}\geq D_A+1$. So by Theorem \ref{daan} we see that $D_{A,\mathbb Z_n'}=D_A+1$.

Let $A\nsubseteq U(n)$. Let $S$ be a sequence in $\mathbb Z_n$ of length $D_A$. Then $S$ has an $A$-weighted zero-sum subsequence $T$. By Observations \ref{1zn} and \ref{2zn} we see that $T$ is an $(A,\mathbb Z_n')$-weighted zero-sum sequence. Hence, it follows that $D_{A,\mathbb Z_n'}\leq D_A$. Thus, by Theorem \ref{daan} we see that $D_{A,\mathbb Z_n'}=D_A$.
\end{proof}

\begin{theorem}\label{caan}
Let $A\subseteq \mathbb Z_n'$. We have $C_A\leq C_{A,\mathbb Z_n'}\leq 2\,C_A$.
\end{theorem}

\begin{proof}
By Observation \ref{cca} we see that $C_A\leq C_{A,\mathbb Z_n'}$. By Theorem \ref{c'1} and Corollary 1 of \cite{SKS1} we see that $C_{\mathbb Z_2',\mathbb Z_2'}=4$ and $C_{\mathbb Z_2'}=2$. Hence, it follows that $C_{\mathbb Z_2',\mathbb Z_2'}=2\,C_{\mathbb Z_2'}$. So we may assume $n\geq 3$. Let
\[E=\Big\{x\in \mathbb Z_n:\text{there exists }a\in A\text{ such that }ax=0\Big\}.\]

Let $S$ be a sequence in $\mathbb Z_n$ having length $2\,C_A$. Suppose there exist $x,y\in E$ such that $x$ and $y$ are consecutive terms of $S$. Let $T=(x,y)$. Then $T$ is an $A$-weighted zero-sum subsequence of consecutive terms. By Observation \ref{1zn} we see that $T$ is an $(A,\mathbb Z_n')$-weighted zero-sum sequence.

Suppose no two consecutive terms of $S$ are in $E$. Let $S'$ be the subsequence consisting of all the terms of $S$ which are not in $E$. Since $S$ has length $2\,C_A$, we see that the length of $S'$ is at least $C_A$. It follows that $S'$ has an $A$-weighted zero-sum subsequence $T'$ of consecutive terms.

If $T'=(x)$, we see that $x\in E$ since there exists $a\in A$ such that $ax=0$. This contradicts the fact that no term of $S'$ is in $E$. Thus, we see that $T'$ has length at least two. Let $T$ be the subsequence of $S$ whose first and last terms are the first and last terms of $T'$ respectively.

If $x$ is a term of $S-S'$, then $x\in E$, and hence $(x)$ is an $A$-weighted zero-sum sequence. So we see that $T$ is an $A$-weighted zero-sum subsequence of consecutive terms of $S$ of length at least three.

Thus, in both the cases, we see that $S$ has an $A$-weighted zero-sum subsequence of consecutive terms of length at least two. By Observation \ref{1zn} we see that $T$ is an $(A,\mathbb Z_n')$-weighted zero-sum sequence. Hence, it follows that $C_{A,\mathbb Z_n'}\leq 2\,C_A$.
\end{proof}

\begin{theorem}\label{can}
Let $A\subseteq \mathbb Z_n'$. We have
\[C_{A,\mathbb Z_n'}=
\begin{cases}
2\,C_A, & \emph {if }A\subseteq U(n); \\
C_A, & \emph{if }A\nsubseteq U(n).
\end{cases}\]
\end{theorem}

\begin{proof}
The case $n=2$ follows from the first paragraph of the proof of Theorem \ref{caan}. So we may assume that $n\geq 3$.

Let $A\subseteq U(n)$. There exists a sequence $S'=(x_1,\ldots,x_k)$ in $\mathbb Z_n$ of length $C_A-1$ which does not have any $A$-weighted zero-sum subsequence of consecutive terms. Let $S=(0,x_1,0,x_2,0,\ldots,x_k,0)$. Then $S$ has length $2\,C_A-1$. The only $A$-weighted zero-sum subsequence of consecutive terms of $S$ is $T=(0)$.

Since $A\subseteq U(n)$, we see that $T=(0)$ is not an $(A,\mathbb Z_n')$-weighted zero-sum sequence. Thus, we see that $S$ does not have any $(A,\mathbb Z_n')$-weighted zero-sum subsequence of consecutive terms. So it follows that $C_{A,\mathbb Z_n'}\geq 2\,C_A$. Hence, by Theorem \ref{caan} we see that $C_{A,\mathbb Z_n'}=2\,C_A$.

Suppose $A\nsubseteq U(n)$. Let $S$ be a sequence in $\mathbb Z_n$ of length $C_A$. Then $S$ has an $A$-weighted zero-sum subsequence $T$ of consecutive terms. By Observations \ref{1zn} and \ref{2zn} we see that $T$ is an $(A,\mathbb Z_n')$-weighted zero-sum sequence. Hence, it follows that $C_{A,\mathbb Z_n'}\leq C_A$. Thus, by Theorem \ref{caan} we see that $C_{A,\mathbb Z_n'}=C_A$.
\end{proof}

\begin{theorem}\label{ean}
Let $A\subseteq \mathbb Z_n'$. We have $E_{A,\mathbb Z_n'}=E_A$.
\end{theorem}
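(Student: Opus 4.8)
The plan is to establish the two inequalities $E_A\leq E_{A,\mathbb Z_n'}$ and $E_{A,\mathbb Z_n'}\leq E_A$ separately, after first disposing of the exceptional value $n=2$. For $n=2$ we have $A\subseteq\mathbb Z_2'=\{1\}$, which forces $A=\{1\}=\mathbb Z_2'$, so the assertion reduces to $E_{\mathbb Z_2',\mathbb Z_2'}=E_{\mathbb Z_2'}$, which is precisely Remark \ref{e2}. Hence I would assume $n\geq 3$ for the remainder of the argument.

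For the inequality $E_A\leq E_{A,\mathbb Z_n'}$, I would invoke the elementary fact recorded in the introduction that every $(A,\mathbb Z_n')$-weighted zero-sum sequence is in particular an $A$-weighted zero-sum sequence. Consequently, any sequence of length $E_{A,\mathbb Z_n'}$, which is guaranteed an $(A,\mathbb Z_n')$-weighted zero-sum subsequence of length $n$, automatically has an $A$-weighted zero-sum subsequence of length $n$; by the definition of $E_A$ this yields $E_A\leq E_{A,\mathbb Z_n'}$.

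The reverse inequality $E_{A,\mathbb Z_n'}\leq E_A$ is the substantive direction, and the point is that it costs nothing once Observation \ref{1zn} is available. I would take an arbitrary sequence $S$ of length $E_A$; by the definition of $E_A$ it has an $A$-weighted zero-sum subsequence $T$ of length $n$. Since $n\geq 3\geq 2$ and $A\subseteq\mathbb Z_n'$, Observation \ref{1zn} applies to the length-$n$ sequence $T$ and upgrades it to an $(A,\mathbb Z_n')$-weighted zero-sum sequence. Thus every sequence of length $E_A$ has an $(A,\mathbb Z_n')$-weighted zero-sum subsequence of length $n$, giving $E_{A,\mathbb Z_n'}\leq E_A$, and combining the two inequalities completes the proof.

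The only genuine subtlety is the mechanism behind Observation \ref{1zn}: the weights $a_1,\ldots,a_n\in A$ realizing the $A$-weighted zero-sum of $T$ themselves constitute a length-$n$ sequence in $\mathbb Z_n'$, hence one with at least two non-zero terms, so the Cauchy--Davenport argument underlying Lemma \ref{z'} produces weights $b_1,\ldots,b_n\in\mathbb Z_n'$ with $b_1a_1+\cdots+b_na_n=0$. This is exactly where the hypothesis $n\neq 2$ is needed, which is why the case $n=2$ is handled separately at the outset.
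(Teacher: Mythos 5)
Your proof is correct and follows essentially the same route as the paper's: dispose of $n=2$ via Remark \ref{e2}, get $E_A\leq E_{A,\mathbb Z_n'}$ from the trivial implication between the two notions of weighted zero-sum, and obtain the reverse inequality by applying Observation \ref{1zn} to a length-$n$ $A$-weighted zero-sum subsequence of an arbitrary sequence of length $E_A$. In fact your version is slightly more careful than the paper's, which cites Remark \ref{c2} for the base case where Remark \ref{e2} is evidently intended, and which does not spell out (as you do) why $A$ is forced to equal $\mathbb Z_2'$ when $n=2$.
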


\begin{proof}
By Theorem \ref{e'1} we see that $E_{\mathbf 1,\mathbf 1}=3$ and from \cite{EGZ} we see that $E_{\mathbf 1}=3$. Hence, we are done when $n=2$. Let $n\geq 3$ and let $S$ be a sequence in $\mathbb Z_n$ of length $E_A$. Then $S$ has an $A$-weighted zero-sum subsequence $T$ of length $n$. By Observation \ref{1zn} we see that $T$ is an $(A,\mathbb Z_n')$-weighted zero-sum sequence. Thus, we see that $E_{A,\mathbb Z_n'}\leq E_A$. Hence, by Observation \ref{cca} it follows that $E_{A,\mathbb Z_n'}=E_A$.
\end{proof}

\section{Extremal sequences for $(A,\mathbb Z_n')$}

In this section, we characterize the extremal sequences for $(A,\mathbb Z_n')$ using the corresponding extremal sequences for $A$ where $A$ is a non-empty subset of $\mathbb Z_n'$.

\begin{remark}\label{zun}
Let $A\subseteq U(n)$ and let $x\in \mathbb Z_n$. If $(x)$ is an $A$-weighted zero-sum sequence, we see that $x=0$.
\end{remark}

\begin{theorem}\label{da'}
Let $A\subseteq U(n)$. Then a sequence $S$ is a $D$-extremal sequence for $(A,\mathbb Z_n')$ if and only if $S$ has a zero and $S-(0)$ is a $D$-extremal sequence for $A$.
\end{theorem}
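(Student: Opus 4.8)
The plan is to reduce the two-sided $(A,\mathbb Z_n')$-weighting condition to a purely one-sided ($A$-weighted) condition, exploiting that every element of $A$ is a unit. First I would record the lengths. Since $A\subseteq U(n)$, Theorem \ref{dan} gives $D_{A,\mathbb Z_n'}=D_A+1$, so a $D$-extremal sequence for $(A,\mathbb Z_n')$ has length $D_A$, while a $D$-extremal sequence for $A$ has length $D_A-1$; thus deleting a single zero from $S$ is exactly what is needed to match the two lengths. The case $n=2$ forces $A=\{1\}$, and then the statement can be settled directly from the characterization in Theorem \ref{11} (the only sequences involved are $(0,1)$ and $(1,0)$), so I would assume $n\neq 2$ for the main argument.

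The crux is the following equivalence, valid for $n\neq 2$: a sequence in $\mathbb Z_n$ has an $(A,\mathbb Z_n')$-weighted zero-sum subsequence if and only if it has an $A$-weighted zero-sum subsequence of length at least two. For the forward implication, an $(A,\mathbb Z_n')$-weighted zero-sum subsequence $T$ is in particular $A$-weighted zero-sum; if $|T|=1$ then Remark \ref{zun} forces $T=(0)$, but since $A\subseteq U(n)$ the sequence $(0)$ is not $(A,\mathbb Z_n')$-weighted zero-sum, because a relation $ba=0$ with $a\in A\subseteq U(n)$ and $b\in\mathbb Z_n'$ is impossible; hence $|T|\geq 2$. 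Conversely, Observation \ref{1zn} turns any $A$-weighted zero-sum subsequence of length at least two into an $(A,\mathbb Z_n')$-weighted zero-sum subsequence. Consequently $S$ is $D$-extremal for $(A,\mathbb Z_n')$ precisely when $|S|=D_A$ and $S$ has no $A$-weighted zero-sum subsequence of length at least two.

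For the forward direction of the theorem, since $|S|=D_A$ the sequence $S$ has some $A$-weighted zero-sum subsequence, which by the previous paragraph must have length one and hence equal $(0)$ by Remark \ref{zun}; so $S$ has a zero, and it cannot have two (else $(0,0)$ would be a length-two $A$-weighted zero-sum subsequence). Deleting the unique zero, $S-(0)$ has length $D_A-1$ and no $A$-weighted zero-sum subsequence at all: a subsequence of length at least two would persist inside $S$, while a length-one one would be $(0)$ by Remark \ref{zun}, impossible since $S-(0)$ has no zero. Thus $S-(0)$ is $D$-extremal for $A$. For the converse, if $S$ has a zero and $S-(0)$ is $D$-extremal for $A$, then $S-(0)$ has no zero (a zero would be a length-one $A$-weighted zero-sum subsequence), so $S$ has exactly one zero and $|S|=D_A$; any $A$-weighted zero-sum subsequence of $S$ of length at least two either lies inside $S-(0)$ or, after discarding the single zero term it contains, yields an $A$-weighted zero-sum subsequence of $S-(0)$, the length-one case of which is again a spurious zero — all impossible.

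The step I expect to need the most care is this last reduction: when a length-two $A$-weighted zero-sum subsequence contains the zero, removing the zero leaves a length-one $A$-weighted zero-sum subsequence, and it is only Remark \ref{zun} together with $A\subseteq U(n)$ that forces this leftover term to be zero and thereby produces the contradiction. Keeping careful track of whether a candidate subsequence contains the unique zero, and invoking $A\subseteq U(n)$ (rather than merely $A\subseteq\mathbb Z_n'$) at each length-one step, is where the argument is most error-prone.
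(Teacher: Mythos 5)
Your proposal is correct and takes essentially the same route as the paper's proof: both get the lengths from Theorem \ref{dan}, use Observation \ref{1zn} to upgrade $A$-weighted zero-sum subsequences of length at least two to $(A,\mathbb Z_n')$-weighted ones, use Remark \ref{zun} together with $A\subseteq U(n)$ to show the only possible length-one subsequence is $(0)$ and that it is not $(A,\mathbb Z_n')$-weighted zero-sum, and then count zeroes. Your only departures are organizational — you bundle these steps into a single stated equivalence and you dispose of $n=2$ explicitly via Theorem \ref{11}, which is in fact slightly more careful than the paper, since Observation \ref{1zn} is stated only for $n\neq 2$ and the paper invokes it without comment.
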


\begin{proof}
The case $n=2$ follows from Theorem \ref{11}. So we may assume that $n\geq 3$.
Let $S$ be a $D$-extremal sequence for $(A,\mathbb Z_n')$. Since $A\subseteq U(n)$, by Theorem \ref{dan} we see that $D_{A,\mathbb Z_n'}=D_A+1$ and hence $S$ has length $D_A$. By Observation \ref{1zn} we see that $S$ cannot have any $A$-weighted zero-sum subsequence of length at least two. It follows that $S$ has at most one zero.

Let $S'$ be the subsequence consisting of all the non-zero terms of $S$. Suppose $T$ is an $A$-weighted zero-sum subsequence of $S'$. Since $T$ is a subsequence of $S$, we see that $T$ has length one. Since $A\subseteq U(n)$, by Remark \ref{zun} we see that $T=(0)$. This gives the contradiction that $S'$ has a zero.

Thus, we see that $S'$ does not have any $A$-weighted zero-sum subsequence. Hence, $S'$ has length at most $D_A-1$. Since $S$ has at most one zero, we see that $S'$ has length at least $D_A-1$. Since $S'$ has length $D_A-1$, it follows that $S'$ is a $D$-extremal sequence for $A$. Also, we see that $S$ must have a zero.

Let $0$ be a term of $S$ and let $S-(0)$ be a $D$-extremal sequence for $A$. Suppose $S$ has an $(A,\mathbb Z_n')$-weighted zero-sum subsequence $T$. Then $T$ is an $A$-weighted zero-sum subsequence of $S$. If $T\neq (0)$, then we get the contradiction that $S-(0)$ has an $A$-weighted zero-sum subsequence, and hence $T=(0)$.

Since $T$ is an $(A,\mathbb Z_n')$-weighted zero-sum sequence, there exists $a\in A$ and $b\in \mathbb Z_n'$ such that $b\;a=0$. Since $A\subseteq U(n)$, we get the contradiction that $b=0$. Thus, we see that $S$ does not have any $(A,\mathbb Z_n')$-weighted zero-sum subsequence. Since $S$ has length $D_A=D_{A,\mathbb Z_n'}-1$, it follows that $S$ is a $D$-extremal sequence for $(A,\mathbb Z_n')$.
\end{proof}

\begin{theorem}\label{dad}
Let $A\nsubseteq U(n)$. Then a sequence $S$ is a $D$-extremal sequence for $(A,\mathbb Z_n')$ if and only if $S$ is a $D$-extremal sequence for $A$.
\end{theorem}

\begin{proof}
Since $A\nsubseteq U(n)$, we see that $n\geq 3$, and by Theorem \ref{dan} we see that $D_{A,\mathbb Z_n'}=D_A$. By Observations \ref{1zn} and \ref{2zn} we see that a sequence $T$ is an $(A,\mathbb Z_n')$-weighted zero-sum sequence if and only if $T$ is an $A$-weighted zero-sum sequence. Hence, it follows that a sequence $S$ is a $D$-extremal sequence for $A$ if and only if $S$ is a $D$-extremal sequence for $(A,\mathbb Z_n')$.
\end{proof}

By using a similar argument along with Theorem \ref{can}, we get the next result.

\begin{theorem}\label{cad}
Let $A\nsubseteq U(n)$. Then a sequence $S$ is a $C$-extremal sequence for $(A,\mathbb Z_n')$ if and only if $S$ is a $C$-extremal sequence for $A$.
\end{theorem}

From Remark \ref{c11} we see that the next result does not hold when $n=2$.

\begin{theorem}\label{ca'}
Let $A\subseteq U(n)$ where $n\geq 3$. Then a sequence $S$ is a $C$-extremal sequence for $(A,\mathbb Z_n')$ if and only if there exists a sequence $S'=(x_1,\ldots,x_k)$ which is a $C$-extremal sequence for $A$ such that
\begin{equation}\label{cex}
S=(0,x_1,0,x_2,0,\ldots,x_k,0).
\end{equation}
\end{theorem}

\begin{proof}
Let $S$ be a $C$-extremal sequence for $(A,\mathbb Z_n')$. By Observation \ref{1zn} we see that $S$ cannot have an $A$-weighted zero-sum subsequence of consecutive terms of length at least two. Since $A\subseteq U(n)$, by Theorem \ref{can} we see that $C_{A,\mathbb Z_n'}=2\,C_A$ and hence $S$ has length $2\,C_A-1$. If $S$ has at least $C_A+1$ zeroes, then we get the contradiction that $S$ has two consecutive zeroes.

Let $S'$ be the subsequence consisting of all the non-zero terms of $S$. Since $S$ has length $2\,C_A-1$ and since $S$ has at most $C_A$ zeroes, we see that $S'$ has length at least $C_A-1$. Suppose $T$ is an $A$-weighted zero-sum subsequence of consecutive terms of $S'$. Then we see that $T$ has length one. By Remark \ref{zun} we get the contradiction that $S'$ has a zero.

Thus, we see that $S'$ does not have any $A$-weighted zero-sum subsequence of consecutive terms. Since $S'$ has length at least $C_A-1$, it follows that $S'$ has length $C_A-1$. So we see that $S'$ is a $C$-extremal sequence for $A$. Also, since $S$ does not have any consecutive zeroes, we see that $S$ has the form as in \eqref{cex}.

Let $S$ and $S'$ be sequences as in \eqref{cex}. Suppose $S$ has an $(A,\mathbb Z_n')$-weighted zero-sum subsequence of consecutive terms $T$. Then $T$ is an $A$-weighted zero-sum subsequence of consecutive terms of $S$. Since $S'$ does not have any $A$-weighted zero-sum subsequence of consecutive terms, we see that $T=(0)$.

Since $T$ is an $(A,\mathbb Z_n')$-weighted zero-sum sequence, there exists $a\in A$ and $b\in \mathbb Z_n'$ such that $b\,a=0$. Since $A\subseteq U(n)$, we get the contradiction that $b=0$. Thus, we see that $S$ does not have any $(A,\mathbb Z_n')$-weighted zero-sum subsequence of consecutive terms. Since $S$ has length $2\,C_A-1=C_{A,\mathbb Z_n'}-1$, it follows that $S$ is a $C$-extremal sequence for $(A,\mathbb Z_n')$.
\end{proof}

\begin{theorem}\label{en'}
Let $A\subseteq \mathbb Z_n'$. Then $S$ is an $E$-extremal sequence for $(A,\mathbb Z_n')$ if and only if $S$ is an $E$-extremal sequence for $A$.
\end{theorem}

\begin{proof}
When $n=2$, we are done by Theorem \ref{11}. Let $n\geq 3$. By Observation \ref{1zn} we see that a sequence $T$ of length $n$ is an $(A,\mathbb Z_n')$-weighted zero-sum sequence if and only if $T$ is an $A$-weighted zero-sum sequence. By Theorem \ref{ean} we see that $E_{A,\mathbb Z_n'}=E_A$. Hence, it follows that a sequence $S$ is an $E$-extremal sequence for $A$ if and only if $S$ is an $E$-extremal sequence for $(A,\mathbb Z_n')$.
\end{proof}

\begin{remark}\label{comp}
Let $n\geq 4$. By Theorem 6.1 of \cite{ACFKP} we see that $E_{\mathbb Z_n'}=n+1$. So from Theorems \ref{enz} and \ref{ean} we see that $E_{\mathbb Z_n',\mathbf 1}=E_{\mathbb Z_n',\mathbb Z_n'}$. Since a $(\mathbb Z_n',\mathbf 1)$-weighted zero-sum sequence is also a $(\mathbb Z_n',\mathbb Z_n')$-weighted zero-sum sequence, it follows that an $E$-extremal sequence for $(\mathbb Z_n',\mathbb Z_n')$ is also an $E$-extremal sequence for $(\mathbb Z_n',\mathbf 1)$.
\end{remark}

\begin{theorem}
Let $n\geq 4$. Then an $E$-extremal sequence for $(\mathbb Z_n',\mathbf 1)$ is not an $E$-extremal sequence for $(\mathbb Z_n',\mathbb Z_n')$ if and only if it is a non-zero translate of a sequence which is equivalent to the sequence $S=\big(\underbrace{0,\ldots,0}_\text{$n-1$},1\big)$.
\end{theorem}

\begin{proof}
By Theorem \ref{en'} and by Theorem 1 of \cite{AMP} we see that the $E$-extremal sequences for $(\mathbb Z_n',\mathbb Z_n')$ are exactly the sequences which are equivalent to $S$. Hence, by Theorem \ref{eenz} and by Theorem 1 of \cite{AMP} we are done.
\end{proof}

When $n=2$, we see that $(\mathbb Z_n',\mathbb Z_n')=(\mathbb Z_n',\mathbf 1)$. By Theorems \ref{enz3} and \ref{ean} we see that $E_{\mathbb Z_3',\mathbb Z_3'}\neq E_{\mathbb Z_3',\mathbf 1}$. Hence, we cannot compare the $E$-extremal sequences for the constants $E_{\mathbb Z_3',\mathbb Z_3'}$ and $E_{\mathbb Z_3',\mathbf 1}$.

\section{$(\mathbb Z_n',B)$-weighted zero-sum constants}\label{b}

\begin{remark}\label{ub}
Let $B\subseteq\mathbb Z_n'$. If a sequence $S$ is a $(\mathbb Z_n',\mathbf 1)$-weighted zero-sum sequence, then $S$ is also a $(\mathbb Z_n',B)$-weighted zero-sum sequence.
\end{remark}

\begin{theorem}\label{bg}
Let $B\subseteq \mathbb Z_n'$. Then we have the following results:
\begin{enumerate}[label=(\arabic*),ref=(\arabic*)]
\item[$(a)$] $2\leq C_{\mathbb Z_n',B}\leq 4$.
\item[$(b)$] $2\leq D_{\mathbb Z_n',B}\leq 3$.
\item[$(c)$] $E_{\mathbb Z_n',B}=n+1\mbox{ when }n\neq 3\mbox{ or when }B=\mathbb Z_n'$.
\item[$(d)$] $E_{\mathbb Z_3'\,,\mathbf 1}=E_{\mathbb Z_3',\{-1\}}=5$.
\end{enumerate}
\end{theorem}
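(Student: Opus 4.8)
The plan is to obtain parts $(a)$, $(b)$, and the case $n\neq 3$ of $(c)$ uniformly from the already-computed $(\mathbb Z_n',\mathbf 1)$-constants via Lemma \ref{ub}, and then to settle the three values in $(d)$ one at a time.

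For the upper bounds, note that since $B\subseteq\mathbb Z_n'$, Lemma \ref{ub} makes every $(\mathbb Z_n',\mathbf 1)$-weighted zero-sum subsequence automatically a $(\mathbb Z_n',B)$-weighted zero-sum subsequence, and this is compatible with the extra requirements of being consecutive or of prescribed length. Hence any witnessing subsequence for a $(\mathbb Z_n',\mathbf 1)$-constant witnesses the corresponding $(\mathbb Z_n',B)$-constant, giving
\[C_{\mathbb Z_n',B}\leq C_{\mathbb Z_n',\mathbf 1}=4,\qquad D_{\mathbb Z_n',B}\leq D_{\mathbb Z_n',\mathbf 1}=3\]
by Theorems \ref{cnz} and \ref{dnz}, and $E_{\mathbb Z_n',B}\leq E_{\mathbb Z_n',\mathbf 1}=n+1$ for $n\neq 3$ by Theorem \ref{enz}. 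For the lower bounds in $(a)$ and $(b)$ I would use that the one-term sequence $(1)$ has no nonempty $(\mathbb Z_n',B)$-weighted zero-sum subsequence, as a nonzero weight times $1$ is never $0$; this forces $C_{\mathbb Z_n',B},D_{\mathbb Z_n',B}\geq 2$. For $(c)$ the length-$n$ sequence $(0,\ldots,0,1)$ is not a $(\mathbb Z_n',B)$-weighted zero-sum sequence, since its only length-$n$ subsequence puts a nonzero weight on the final $1$; thus $E_{\mathbb Z_n',B}\geq n+1$, which together with the upper bound proves $(c)$.

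Turning to $(d)$, the value $E_{\mathbb Z_3',\mathbf 1}=5$ is precisely Theorem \ref{enz3}. For $E_{\mathbb Z_3',\{-1\}}$ the bound $\leq 5$ follows once more from Lemma \ref{ub} and Theorem \ref{enz3}, and for the reverse inequality I would reuse the sequence $(0,0,1,1)$. Its only $\mathbb Z_3'$-weighted zero-sum subsequence of length three is $(0,1,1)$; if $a$ is the weight on $0$ and $b,c$ the weights on the two $1$'s, the first condition forces $b+c=0$, and the $\{-1\}$-condition then demands $a+b+c=0$, forcing $a=0$, which is impossible for a unit $a$. Hence $(0,0,1,1)$ admits no $(\mathbb Z_3',\{-1\})$-weighted zero-sum subsequence of length three and $E_{\mathbb Z_3',\{-1\}}\geq 5$.

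The one genuinely different value is $E_{\mathbb Z_3',\mathbb Z_3'}=4$, and here I would apply Theorem \ref{ean} with $A=B=\mathbb Z_3'$ to reduce to proving $E_{\mathbb Z_3'}=4$; the conceptual point is that when $B=\mathbb Z_3'$ the weight-condition is automatically met by Observation \ref{1zn}, whereas the singletons $\mathbf 1$ and $\{-1\}$ impose a genuine constraint. The lower bound $E_{\mathbb Z_3'}\geq 4$ comes from $(0,0,1)$. For the upper bound, writing $2=-1$ in $\mathbb Z_3$, a triple $(x,y,z)$ is a $\mathbb Z_3'$-weighted zero-sum sequence exactly when some sign choice gives $\pm x\pm y\pm z=0$, and a brief enumeration shows the only failing triples are those with exactly two zero entries; splitting a length-four sequence on its number of zeros then always exhibits a triple not of this form, so $E_{\mathbb Z_3'}\leq 4$. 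I expect the main difficulty to be conceptual rather than computational, namely explaining cleanly why the large weight-set $\mathbb Z_3'$ collapses $E_{\mathbb Z_3',\mathbb Z_3'}$ to the plain constant $E_{\mathbb Z_3'}=4$ while the restrictive singletons keep the value at $5$, their sole witnessing triple $(0,1,1)$ being unable to satisfy the weight-condition.
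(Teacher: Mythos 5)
Your proof is correct, and your upper bounds follow the paper exactly: Lemma \ref{ub} converts any witnessing $(\mathbb Z_n',\mathbf 1)$-weighted zero-sum subsequence into a $(\mathbb Z_n',B)$-weighted one of the same shape, and then Theorems \ref{cnz}, \ref{dnz}, \ref{enz} and \ref{enz3} give $C_{\mathbb Z_n',B}\leq 4$, $D_{\mathbb Z_n',B}\leq 3$ and $E_{\mathbb Z_n',B}\leq n+1$ (resp.\ $\leq 5$). Where you genuinely diverge is in the lower bounds and in part $(d)$. The paper gets all lower bounds by first observing that every $(\mathbb Z_n',B)$-weighted zero-sum sequence is a $(\mathbb Z_n',\mathbb Z_n')$-weighted zero-sum sequence, so $C_{\mathbb Z_n',\mathbb Z_n'}\leq C_{\mathbb Z_n',B}$ and similarly for $D$ and $E$, and then importing $C_{\mathbb Z_n'}=D_{\mathbb Z_n'}=2$ from \cite{SKS1} and $E_{\mathbb Z_n'}=n+1$ from \cite{ACFKP} through Theorems \ref{caan}, \ref{daan} and \ref{ean}; you instead exhibit explicit obstruction sequences, $(1)$ for $(a)$ and $(b)$ and $\big(\underbrace{0,\ldots,0}_{n-1},1\big)$ for $(c)$, which is more elementary and self-contained, and which reaches arbitrary $B$ directly rather than through the $B=\mathbb Z_n'$ case. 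Likewise in $(d)$, the paper obtains $E_{\mathbb Z_3',\mathbb Z_3'}=4$ by citing Theorem 6.1 of \cite{ACFKP} together with Theorem \ref{ean}, whereas you re-prove $E_{\mathbb Z_3'}=4$ by hand; your enumeration is right, since with weights in $\{1,-1\}$ the only triples in $\mathbb Z_3$ failing $\pm x\pm y\pm z=0$ are those with exactly two zero entries, and splitting a four-term sequence by its number of zeros always yields a triple avoiding that pattern. Finally, for $E_{\mathbb Z_3',\{-1\}}=5$ the paper only remarks ``using this we can show''; your argument with $(0,0,1,1)$ — whose unique $\mathbb Z_3'$-weighted zero-sum subsequence of length three is $(0,1,1)$, where the zero-sum condition forces $b+c=0$ and the $\{-1\}$-condition $a+b+c=0$ would then force the unit $a$ to be $0$ — supplies exactly the verification the paper omits, so on this point your write-up is more complete than the original.
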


\begin{proof}
By Theorem 2 of \cite{SKS1} we see that $C_{\mathbb Z_n'}=D_{\mathbb Z_n'}=2$. By Observation \ref{cca} we see that $C_{\mathbb Z_n',B}\geq 2$ and $D_{\mathbb Z_n',B}\geq 2$. From Remark \ref{ub} it follows that $C_{\mathbb Z_n',B}\leq C_{\mathbb Z_n',\mathbf 1}$ and $D_{\mathbb Z_n',B}\leq D_{\mathbb Z_n',\mathbf 1}$. By Theorems \ref{dnz} and \ref{cnz} we see that $D_{\mathbb Z_n',\mathbf 1}=3$ and $C_{\mathbb Z_n',\mathbf 1}=4$. Thus, we get $(a)$ and $(b)$.

By Theorem 6.1 of \cite{ACFKP} we see that $E_{\mathbb Z_n'}=n+1$. So by Theorem \ref{ean} we see that $E_{\mathbb Z_n',\mathbb Z_n'}=n+1$ and by Observation \ref{cca} we see that $E_{\mathbb Z_n',B}\geq n+1$. By Remark \ref{ub} we see that $E_{\mathbb Z_n',B}\leq E_{\mathbb Z_n',\mathbf 1}$. When $n\neq 3$, by Theorem \ref{enz} we see that $E_{\mathbb Z_n',\mathbf 1}=n+1$. Thus, we get $(c)$. By Theorem \ref{enz3} we see that $E_{\mathbb Z_3',\mathbf 1}=5$. It follows that $E_{\mathbb Z_3'\,,\,\{-1\}}=5$. Thus, we get $(d)$.
\end{proof}

\begin{theorem}\label{dnb}
Let $B\subseteq U(n)$. We have $C_{\mathbb Z_n',B}=4$ and $D_{\mathbb Z_n',B}=3$.
\end{theorem}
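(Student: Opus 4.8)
The plan is to combine the upper bounds already recorded in Theorem \ref{bg} with matching lower bounds, where the lower bounds come from reusing the very sequences used in Theorems \ref{dnz} and \ref{cnz}. Parts $(a)$ and $(b)$ of Theorem \ref{bg} already give $C_{\mathbb Z_n',B}\leq C_{\mathbb Z_n',\mathbf 1}=4$ and $D_{\mathbb Z_n',B}\leq D_{\mathbb Z_n',\mathbf 1}=3$, so it remains only to establish the lower bounds $C_{\mathbb Z_n',B}\geq 4$ and $D_{\mathbb Z_n',B}\geq 3$.

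The single observation driving both lower bounds is that the one-term sequence $(0)$ is \emph{not} a $(\mathbb Z_n',B)$-weighted zero-sum sequence. Indeed, if it were, there would be $a\in\mathbb Z_n'$ and $b\in B$ with $ba=0$; but $b\in B\subseteq U(n)$ is a unit, which forces $a=0$ and contradicts $a\in\mathbb Z_n'$. This is precisely the obstruction exploited for the weight-set pair $(\mathbb Z_n',\mathbf 1)$, and it transfers verbatim to $(\mathbb Z_n',B)$ because $B$ consists of units.

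For $D_{\mathbb Z_n',B}\geq 3$ I would take the sequence $(0,1)$. As noted in the proof of Theorem \ref{dnz}, its only $\mathbb Z_n'$-weighted zero-sum subsequence is $(0)$, which by the previous paragraph is not a $(\mathbb Z_n',B)$-weighted zero-sum sequence; hence $(0,1)$ has no $(\mathbb Z_n',B)$-weighted zero-sum subsequence at all. Similarly, for $C_{\mathbb Z_n',B}\geq 4$ I would take $(0,1,0)$: as in the proof of Theorem \ref{cnz}, every $\mathbb Z_n'$-weighted zero-sum subsequence of consecutive terms of $(0,1,0)$ equals $(0)$, again not a $(\mathbb Z_n',B)$-weighted zero-sum sequence, so $(0,1,0)$ has no $(\mathbb Z_n',B)$-weighted zero-sum subsequence of consecutive terms. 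Together with the upper bounds this yields $D_{\mathbb Z_n',B}=3$ and $C_{\mathbb Z_n',B}=4$.

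There is no genuine obstacle here: the whole content is the unit observation plus the two extremal sequences already in hand. The only point requiring care is to keep the two roles of the weights straight, namely that the primary weights $a_i$ range over $\mathbb Z_n'$ while the balancing weights $b_i$ range over $B\subseteq U(n)$, so that an all-zero length-one sequence can never be balanced; everything else is a direct quotation of Theorems \ref{dnz}, \ref{cnz}, and \ref{bg}.
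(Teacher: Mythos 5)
Your proposal is correct and follows essentially the same route as the paper's own proof: the paper likewise takes the upper bounds from Theorem \ref{bg}, uses the sequences $(0,1)$ and $(0,1,0)$ whose only $\mathbb Z_n'$-weighted zero-sum subsequence (respectively, of consecutive terms) is $(0)$, and observes that $ba=0$ with $b\in B\subseteq U(n)$ forces the contradiction $a=0$, so $(0)$ is not a $(\mathbb Z_n',B)$-weighted zero-sum sequence. There is nothing to add or repair.
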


\begin{proof}
Let $S_1=(0,1,0)$ and $S_2=(0,1)$. The only $\mathbb Z_n'$-weighted zero-sum subsequence of consecutive terms of $S_1$ is $T=(0)$. Since $B\subseteq U(n)$, we see that $T$ is not a $(\mathbb Z_n',B)$-weighted zero-sum sequence. Hence, by Theorem \ref{bg} it follows that $C_{\mathbb Z_n',B}=4$. By a similar argument, we see that $S_2$ does not have any $(\mathbb Z_n',B)$-weighted zero-sum subsequence. Hence, by Theorem \ref{bg} it follows that $D_{\mathbb Z_n',B}=3$.
\end{proof}

\section{Concluding remarks}

Let $R$ be a ring with unity, let $M$ be an $R$-module, and let $A$, $B$, $C$ be non-empty subsets of $R$. A sequence $(x_1,\ldots,x_k)$ in $M$ is called an {\it $(A,B,C)$-weighted zero-sum sequence} if there exist $a_1,\ldots,a_k\in A$, $b_1,\ldots,b_k\in B$, $c_1,\ldots,c_k\in C$ such that
\[a_1x_1+\cdots+a_kx_k=0,~b_1a_1+\cdots+b_ka_k=0,~c_1b_1+\cdots+c_kb_k=0.\]
We can define the $(A,B,C)$-weighted constants $D_{A,B,C}(M)$ and $E_{A,B,C}(M)$ in an analogous manner as in this article.

Let $B\subseteq U(n)$. From Theorems \ref{dnz} and \ref{dnb} we see that $D_{\mathbb Z_n',B}=D_{\mathbb Z_n',\mathbf 1}=3$. By Remark \ref{ub} it follows that a $D$-extremal sequence for $(\mathbb Z_n',B)$ is also a $D$-extremal sequence for $(\mathbb Z_n',\mathbf 1)$. The sequence $S=(0,1)$ is a $D$-extremal sequence for $(\mathbb Z_n',B)$. By Remark \ref{eq} and Theorem \ref{ednz} it remains to determine which translates of $S$ are $D$-extremal sequences for $(\mathbb Z_n',B)$.

Let $B\subseteq \mathbb Z_n'$. From Theorems \ref{enz}, \ref{enz3}, and \ref{bg} we see that $E_{\mathbb Z_n',B}=E_{\mathbb Z_n',\mathbf 1}$. So we can determine which $E$-extremal sequences for $(\mathbb Z_n',\mathbf 1)$ are also $E$-extremal sequences for $(\mathbb Z_n',B)$. The constants $C_{\mathbb Z_n',B}$ and $D_{\mathbb Z_n',B}$ have been computed when $B\subseteq U(n)$ and when $B=\mathbb Z_n'$. Their values may be found for other subsets $B\subseteq \mathbb Z_n'$.
\bigskip

\end{document}